\documentclass[11pt]{amsart}
\usepackage{amsmath,amssymb,amsthm}
\usepackage{amsmath,amssymb,amsthm,amscd}
\usepackage[frame,cmtip,arrow,matrix,line,graph,curve]{xy}
\usepackage{graphpap, color}
\usepackage[mathscr]{eucal}
\usepackage{color}
\numberwithin{equation}{section}
 \numberwithin{equation}{section}



\setcounter{section}{0}

\pagestyle{myheadings} \footskip=50pt

\setlength{\oddsidemargin}{0cm}

\renewcommand{\oddsidemargin}{5mm}

\newtheorem{prop}{Proposition}
\newtheorem{thm}[prop]{Theorem}
\newtheorem{lemma}[prop]{Lemma}
\newtheorem{cor}[prop]{Corollary}

\newtheorem{remark}[prop]{Remark}
\newtheorem{define}[prop]{Definition}

\theoremstyle{definition}

\theoremstyle{remark}

\newcommand{\p}{\partial}

\def\R{\mathbb R}

\def\S{\mathbb S}

\def\f{\frac}

\def\la{\lambda}

\title[On the self-shrinking systems]{On the self-shrinking systems in arbitrary
codimensional spaces}
\author{Qi Ding}
\address{Institute of Mathematics\\ Fudan University \\ Shanghai
200433, China} \email{09110180013@fudan.edu.cn}
\author{Zhizhang Wang}
\address{Institute of Mathematics\\ Fudan University \\ Shanghai
200433, China} \email{youxiang163wang@163.com}
\begin{document}
\maketitle
\begin{abstract}
In this paper, we discuss the self-shrinking systems in higher
codimensional spaces. We mainly obtain several Bernstein type
results and a sharp growth estimate.

\noindent{\bf MSC 2010:} Primary 53C44.
\end{abstract}
\section{Introduction}
Let $M$ be an $n$-dimensional manifold immersed into the Euclidean
space $\R^{n+m}$. Denote the immersed map by $X : M \rightarrow
\R^{n+m}$. Let $(\cdots)^N$ be the projective map from the trivial
bundle $\R^{n+m}\times M$ onto the normal bundle over $M$. Then we can
define the mean curvature vector of the immersed manifold $M$ into
$\R^{n+m}$, seeing \cite{Xin},
$$H\ \ = \ \ \sum_{i=1}^n\overline{\nabla}^N_{e_i}e_i.$$ Here,
$\{e_i\}_{i=1}^n$ is a local orthonormal tangent frame of $M$,
$\overline{\nabla}$ is the canonical connection of $\R^{n+m}$ and $\overline{\nabla}^N$ is the connection of normal bundle $NM$. If we
let the position vector $X$ move in the direction of the mean
curvature vector $H$, then it gives the \emph{mean curvature flow},
namely,
\begin{eqnarray}\label{1.1}
\f{\p X}{\p t}&=&H,\qquad \mathrm{on}\
M\times[0,T).
\end{eqnarray}
Here, $[0,T)$ is the maximal finite time interval on which the flow
exists.

The immersed manifold $M$ is said to be a \emph{self-shrinker} (see
 \cite{H2} or \cite{Smo} for details), if it satisfies a quasi-linear elliptic
system,
\begin{eqnarray}\label{1.2}
H \ \  = \ \ -X^N.
\end{eqnarray}
Self-shrinkers are an important class of solutions to (\ref{1.1}).
Not only are they shrinking homothetic under mean curvature flow
(see \cite{CM} for detail), but also they describe all possible blow
ups at a given singularity of a mean curvature flow.

Now, we give a roughly brief review about the self-shrinkers. For
curves case, U. Abresch and J. Langer \cite{AL} gave a complete
classification of all solutions to (\ref{1.2}). These curves are now
called  Abresch-Langer curves.

For codimension 1 case, K. Ecker and G. Huisken \cite{EH} showed
that a self-shrinker is a hyperplane, if it is an entire graph with
polynomial volume growth. Let $\vec{n}$ be the unit outward normal
vector of $M^n$ in $\R^{n+1}$ and $|B|$ be the norm of the
second fundamental form of $M$. In \cite{H2} and \cite{H3}, G. Huisken
gave the classification theorem that the only possible smooth
self-shrinkers in $\R^{n+1}$ satisfying mean curvature of $M$, $\mathrm{div}(\vec{n})\geqq  0$,  $|B|$  bounded
and polynomial volume growth are isometric to $\Gamma\times
\R^{n-1}$ or $\S^k\times\R^{n-k}$ ($0\leqq
 k\leqq
n$). Here, $\Gamma$ is an Abresch-Langer curve and $\S^k$ is a
$k$-dimensional sphere. In general, the classification of
self-shrinkers seems difficult. However T.H. Colding and W.P.
Minicozzi II \cite{CM1} offer a possibility. Recently, They
\cite{CM}, \cite{CM7} showed that a long-standing conjecture of
Huisken is right. The conjecture is classifying the singularities of
mean curvature flow starting at a generic closed embedded surface in
$\R^3$. They also proved that G. Huisken's classification theorem
still holds without the assumption that $|B|$ is bounded. Meanwhile,
L. Wang \cite{Lwang} proved that an entirely graphic self-shrinker
should be a hyperplane without any other assumption.

Another special case is that the self-shrinker is a Lagrangian
graph. In \cite{ACH2}, A. Chau, J.Y. Chen, and W.Y. He firstly study
Lagrangian self-shrinkers in Euclidean space.  Recently, R. Huang
and Z. Wang \cite{HW} obtained the following two results for
Lagrangian graphs. In pseudo-Euclidean space, if the Hessian of the
potential function has order two decay in the infinity, the
self-shrinker defined by the potential is a linear subspace. In
Euclidean space, if the potential function is convex or concave, the
corresponding self-shrinker is a linear subspace. After that, A.
Chau, J.Y. Chen and Y. Yuan \cite{CCY} improved and generalized
previous results. In Euclidean space, they drop the assumption that
the potential should be convex or concave. In pseudo-Euclidean,
using different method, they proved a similar result. They also
generalized their pseudo-Euclidean result to complex case, which
relates to a special class of self-shrinking K\"ahler-Ricci
solitons.

In arbitrary codimension case, K. Smozyk in \cite{Smo} proved two
results. Suppose the manifold $M^m$ is a compact self-shrinker, then
$M$ is spherical if and only if $H\neq 0$ and its principal normal
vector $\nu=H/|H|$ is parallel in the normal bundle. Suppose $M^m$
is a complete connected self-shrinker with $H\neq 0$, parallel
principal normal and having uniformly bounded geometry, then $M$
must be $\Gamma\times \R^{m-1}$ or $\tilde{M}^r\times \R^{m-r}$.
Here $\Gamma$ is an Abresch-Langer curve and $\tilde{M}$ is a
minimal submanifold in sphere.

The study of higher codimensional self-shrinkers seems difficult for
us. Hence, in the present paper, we only consider the simplest case
that $M$ is a smooth graph.

Assume $M$ to be a codimension $m$ smooth graph in
$\mathbb{R}^{n+m}$,
$$M=\{(x_1,\cdots,x_n,u^1,\cdots,u^m)\in \mathbb{R}^{n+m}; x_i\in \mathbb{R}, u^{\alpha}=u^{\alpha}(x_1,\cdots,x_n)\},$$
where $i=1,\cdots,n$ and $\alpha=1,\cdots,m$. Denote
\begin{eqnarray}\label{x,u}
x=(x_1,x_2,\cdots,x_n)\in \mathbb{R}^n;\ \ u=(u^1,\cdots,u^m)\in
\mathbb{R}^m.
\end{eqnarray}
In Euclidean and pseudo-Euclidean space, denote the metric on $M$ be
$g=\sum_{i,j=1}^ng_{ij}dx_idx_j$, and
$$g_{ij}\ \ = \ \ \delta_{ij}\pm\sum_{\alpha=1}^mu^{\alpha}_iu^{\alpha}_j.$$
Here, $u^\alpha_i=\dfrac{\p u^\alpha}{\p x_i}$, $''+''$ is chosen in
Euclidean space and $''-''$ is chosen by space-like submanifolds in
pseudo-Euclidean space with index $m$, which means the background
metric in $\R^{n+m}$ is
$$ds^2=\sum_{i=1}^ndx^2_i-\sum_{\alpha=1}^mdx_{n+\alpha}^2,$$ seeing \cite{Xin} for details. Then,
\eqref{1.2} is equivalent to the following elliptic system
\begin{eqnarray}\label{meq}
\sum_{i,j=1}^ng^{ij}u^{\alpha}_{ij}&=&-u^{\alpha}+x\cdot D u^{\alpha}.
\end{eqnarray}
Here, $Du^{\alpha}=(u^{\alpha}_1,\cdots, u^{\alpha}_n)$, and
$''\cdot''$ denotes the canonical inner product in $\R^n$. The
details of the calculation will appear in section 2.

In section 3, a special class of functions is important. It is,
\begin{define} Assume $\phi$ to be a $C^2$-function defined on
$\mathbb{R}^n$. We call that $\phi$ is a strong-sub-harmonic
(shortly, SSH) function, if $\phi$ satisfies
\begin{eqnarray}
\sum_{i,j=1}^ng^{ij}\phi_{ij}-x\cdot D \phi&\geqq& \varepsilon
\sum_{i,j=1}^ng^{ij}\phi_i\phi_j,
\end{eqnarray}
for some small positive constant $\varepsilon$.
\end{define}
Based on the Lemma \ref{sta}, we know that the only possible SSH
functions are constants in $\mathbb{R}^n$. Hence, our crucial
problem becomes to find appropriate SSH-functions. We prove that the
volume element function is  SSH, if one of the following three
assumptions is satisfied. The first one is that the multiple of two
different eigenvalues of the singular value of $du$ (c.f.
\cite{Wang1}) is no more than one. The second one is that the volume
element function is less than some positive constant $\beta<9$. The
last one is  a communication formula which includes the  normal
bundle flat case and codimension 1 case. Then, the volume element
function is a constant which implies Bernstein type results. The
corresponding minimal submanifolds cases have appeared in
\cite{JXY}, \cite{SWX} and \cite{Wang1}.  At the end of this
section,  we prove that there is no non-trivial rotationary
symmetric self-shrinking graph, but for submanifolds, we have
non-trivial examples.  S. Kleene and N.M. M{\o}ller \cite{KM} gave
the classification for complete embedded revolutionary
hypersurfaces.

In section 4, we give the sharp growth estimate of $u^{\alpha}$.
Obviously, the linear functions satisfy \eqref{meq}. In Euclidean
space, we have the following estimate.
\begin{thm}\label{thm1}
The solution $u^{\alpha}$  of system \eqref{meq} defining a graph in
Euclidean space is linear growth. In fact,  we have the following
estimate,
\begin{eqnarray}\label{1.3}
|u(x)|^2&\leqq& (\frac{2|x|^2}{3n}+1)(\sup_{|x|\leqq
2\sqrt{3n}}|u(x)|^2+12n),
\end{eqnarray}
where $|u(x)|^2=\sum_{\alpha=1}^m(u^\alpha(x))^2$.
\end{thm}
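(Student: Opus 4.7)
The estimate is equivalent to showing
\[
F(x) \;:=\; |u(x)|^2 \;-\; K\Bigl(\tfrac{2|x|^2}{3n}+1\Bigr) \;\le\; 0
\]
on all of $\R^n$, with $K := \sup_{|y|\le 2\sqrt{3n}}|u(y)|^2 + 12n$ and $\rho(x):=\tfrac{2|x|^2}{3n}+1$. I would prove this by a maximum-principle argument, in three steps.

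First, on the inner ball $\{|x|\le 2\sqrt{3n}\}$, one has $\rho\ge 1$, so
\[
F(x) \le |u(x)|^2 - K \le \sup_{|y|\le 2\sqrt{3n}}|u(y)|^2 - K = -12n < 0.
\]

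Second, I derive the key differential inequality by introducing the drift operator $\mathcal L := g^{ij}\partial_i\partial_j - x\cdot D$, so that the self-shrinker system \eqref{meq} reads $\mathcal L u^\alpha = -u^\alpha$. The structural identity $g_{ij} - \delta_{ij} = \sum_\alpha u^\alpha_i u^\alpha_j$ gives
\[
g^{ij}\sum_\alpha u^\alpha_i u^\alpha_j \;=\; g^{ij}(g_{ij}-\delta_{ij}) \;=\; n-\mathrm{tr}(g^{-1}),
\]
from which a direct computation yields
\[
\mathcal L|u|^2 = -2|u|^2 + 2\bigl(n-\mathrm{tr}(g^{-1})\bigr),\qquad \mathcal L|x|^2 = 2\mathrm{tr}(g^{-1}) - 2|x|^2.
\]
Substituting $|u|^2 = F + K\rho$ into $\mathcal L|u|^2 - \tfrac{2K}{3n}\mathcal L|x|^2$, one finds
\[
\mathcal L F + 2F \;=\; 2(n-K) \;-\; 2\Bigl(1+\tfrac{2K}{3n}\Bigr)\mathrm{tr}(g^{-1}) \;\le\; 2(n-K) \;\le\; -22n,
\]
using $K \ge 12n$ and $\mathrm{tr}(g^{-1})\ge 0$.

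Third, I apply the maximum principle on a bounded annulus $A_R = \{2\sqrt{3n}\le|x|\le R\}$. Step 1 controls the inner boundary values ($F\le -12n<0$). At any interior maximum point $x_0 \in A_R$ with $F(x_0)>0$, one has $DF(x_0)=0$, which yields the gradient relation $D|u|^2(x_0) = \tfrac{4K}{3n}x_0$, and $g^{ij}F_{ij}(x_0)\le 0$. Combining these with the computation of Step 2 and substituting $|u|^2(x_0) > K\rho(x_0)$ gives a quantitative inconsistency (the Hessian inequality at $x_0$ collides with the strict slack $-22n$ in the differential identity). Letting $R\to\infty$ and subtracting a small barrier of the form $-\varepsilon(1+|x|^2)$, if needed, to absorb the growth at infinity before sending $\varepsilon\to 0$, one concludes $F\le 0$ on $\R^n$.

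\paragraph{Main obstacle.}
The main difficulty lies in Step 3: the zeroth-order coefficient of $\mathcal L + 2I$ has the ``wrong'' sign for a direct weak maximum principle, so the differential inequality $\mathcal L F + 2F \le -22n$ alone is compatible with $F$ having a positive interior maximum. The closing argument must therefore simultaneously exploit the vanishing of $DF$ and the non-positivity of $g^{ij}F_{ij}$ at the maximum, and should also handle the a priori possibility that $F$ is unbounded at infinity. The specific constants $\tfrac{2}{3n}$ and $12n$ in the statement appear to be calibrated precisely to leave enough slack for such an argument to close.
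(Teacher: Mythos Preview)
Your Steps~1 and~2 are correct: the identities $\mathcal L|u|^2=-2|u|^2+2(n-\mathrm{tr}(g^{-1}))$ and $\mathcal L|x|^2=2\,\mathrm{tr}(g^{-1})-2|x|^2$ are right, and the resulting inequality $\mathcal LF+2F\le 2(n-K)\le -22n$ follows. The gap is entirely in Step~3, and it is not just a technicality. At an interior maximum $x_0$ with $F(x_0)>0$ you have $\mathcal LF(x_0)=g^{ij}F_{ij}(x_0)\le 0$; plugging into your identity gives only
\[
F(x_0)\ \ge\ (n-K)-\Bigl(1+\tfrac{2K}{3n}\Bigr)\mathrm{tr}(g^{-1})(x_0),
\]
whose right side is $\le -11n$, so no contradiction arises. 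The gradient relation $D|u|^2(x_0)=\tfrac{4K}{3n}x_0$ does not supply the missing sign either. More fundamentally, you have no a~priori growth bound on $|u|$, so neither $F$ nor $F-\varepsilon(1+|x|^2)$ is known to attain a global maximum; the barrier $-\varepsilon(1+|x|^2)$ only ``absorbs the growth at infinity'' if you already know $|u|^2=O(|x|^2)$, which is exactly the conclusion you are after. Working on the annulus $A_R$ does not help because you have no control of $F$ on the outer boundary $\{|x|=R\}$.

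The paper avoids this circularity by a different device: it introduces the parabolic rescaling $w^\alpha(x,t)=\sqrt{T-t}\,u^\alpha(x/\sqrt{T-t})$, which satisfies a genuine heat equation, and then applies the maximum principle to $\phi=\eta\,|w|^2$ with the \emph{compactly supported} cutoff $\eta(x,t)=1-|x|^2-3nt$. Because $\phi$ vanishes on $\{\eta=0\}$, its maximum on $\R^n\times[0,\tfrac1{12n}]$ is automatically attained without any growth hypothesis on $u$; a short computation then forces the maximum to occur at $t=0$, which translates back into the stated estimate. An elliptic variant of the same idea (multiplying $|u|^2$ by $\bar R^2-|x|^2$ rather than subtracting $K\rho$) also works and is what the paper uses for the more general Theorem later in the same section. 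Either way, the essential missing ingredient in your approach is a cutoff that makes the auxiliary function compactly supported, so that existence of the maximum is free.
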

Then, we generalize the above estimate to the following linear
elliptic system,
\begin{eqnarray}\label {gel}
\sum_{i,j=1}^na^{ij}u^{\alpha}_{ij}&=&-u^{\alpha}+x\cdot D u^{\alpha},
\end{eqnarray}
with two extra assumptions. One is that  the coefficient matrix
$(a^{ij})$ have a uniform upper bound. Namely, there is some
positive constant $\sigma$, such that
\begin{eqnarray}\label{gelcond}
\sum_{i,j=1}^na^{ij}\xi_i\xi_j&\leqq& \sigma |\xi|^2\end{eqnarray}
holds for any vector $\xi\in \R^n$. The other is that there be three
positive constants $r_0$, $c$ and $\tau$, such that for $|x|\geqq
r_0$, we have
\begin{eqnarray}\label{gelcond2}
\sum_{i,j=1}^n\sum^m_{\alpha=1}a^{ij}(x)u^{\alpha}_i(x)u^{\alpha}_j(x)
&\leqq& c |x|^{2\tau-2}.
\end{eqnarray}
Roughly speaking, we prove that any solution $u^{\alpha}$ satisfying
\eqref{gel}, \eqref{gelcond} and \eqref{gelcond2} has polynomial
growth. See Theorem \ref{thm15} for details.

In the last part, we discuss the Bernstein type theorems for the
space-like self-shrinking graph in pseudo-Euclidean space with index
$m$. Firstly, We prove that, if the metric has a positive lower
bound, the graph should be a linear subspace. If a self-shrinking
graph also pass through the origin, we have the following better
result.
\begin{thm}\label{thm2}
Assume $u^{\alpha}$ to be a solution of the system \eqref{meq},
defining a graph in pseudo-Euclidean space with index $m$. Further,
assume that $u^{\alpha}(0)=0$ and that $\det(g)$ has subexponential
decay in the sense that
$$\lim_{|x|\rightarrow\infty}\f{\log\det(g_{ij}(x))}{|x|}=0.$$
Then $M$ is a linear
subspace.
\end{thm}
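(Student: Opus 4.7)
The plan is to follow the same philosophy as Section~3: exhibit a strong-sub-harmonic (SSH) auxiliary function whose growth matches the hypotheses of Lemma~\ref{sta}, conclude it is constant, and read off linearity. In pseudo-Euclidean signature, $g_{ij}=\delta_{ij}-\sum_\alpha u^\alpha_i u^\alpha_j$ is positive definite with all eigenvalues in $(0,1]$, so $\det(g_{ij})\in(0,1]$ and the function
\[
\eta\;:=\;-\log\det(g_{ij})
\]
is non-negative on $M$. The hypothesis $\log\det(g_{ij})/|x|\to 0$ says precisely $\eta=o(|x|)$ at infinity, which is the growth control compatible with the drift operator $L:=\sum_{i,j}g^{ij}\partial_{ij}-x\cdot D$ whose natural weight is $e^{-|x|^2/2}$ and under which $\eta$ and $\eta|D\eta|^2$ remain integrable.

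The first step is to compute $L\eta$. Starting from $\partial_k\log\det g_{ij}=g^{ij}\partial_k g_{ij}=-2\sum_\alpha g^{ij}u^\alpha_j u^\alpha_{ik}$, differentiating once more, contracting with $g^{ij}$, and substituting the self-shrinker system \eqref{meq} (which converts $\sum_{i,j}g^{ij}u^\alpha_{ij}$ into $-u^\alpha+x\cdot Du^\alpha$), I expect a Bochner-type identity in which a squared-second-fundamental-form term appears with a favourable sign, together with a term that assembles into $\sum g^{ij}\eta_i \eta_j$ after a Cauchy--Schwarz manipulation. The pseudo-Euclidean minus sign is exactly what makes $-\log\det g$ (rather than $+\log\det g$) the correct candidate, so that $L\eta\geq \varepsilon\sum g^{ij}\eta_i\eta_j$ for some $\varepsilon>0$ and $\eta$ is SSH in the sense of Definition~1.

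With $\eta$ SSH and of subexponential growth, Lemma~\ref{sta} forces $\eta\equiv$ const, hence $\det(g_{ij})$ is constant on $M$. Tracing back through the Bochner identity, equality in the SSH bound makes the pointwise second-fundamental-form term vanish, so $u^\alpha_{ij}\equiv 0$ for every $\alpha,i,j$, and each $u^\alpha$ is an affine function of $x$. The normalisation $u^\alpha(0)=0$ then kills the constant term, giving that $M$ is a linear subspace through the origin.

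The main obstacle is Step~2: producing the SSH inequality for $\eta$ in pseudo-Euclidean signature with a uniform positive $\varepsilon$. The sign bookkeeping is delicate, since the sharp Cauchy--Schwarz manipulation needed to absorb $\sum g^{ij}\eta_i\eta_j$ into the positive curvature term uses the inequality $g_{ij}\leq\delta_{ij}$ characteristic of the space-like case. A secondary concern is whether Lemma~\ref{sta} as stated directly accommodates the $o(|x|)$ regime, or whether one must re-run its Gaussian integration-by-parts argument with a cutoff tailored to subexponential growth; this is morally identical to the original proof but requires verifying that the $e^{-|x|^2/2}$ weight still dominates all boundary terms.
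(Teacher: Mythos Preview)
Your proposal essentially reproduces the paper's Proposition~17 (the result under a \emph{uniform lower bound} on the eigenvalues of $g$), not Theorem~\ref{thm2}. The two obstacles you flag are both fatal here, and in the opposite order of importance from what you suggest.

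First, the SSH inequality $L_g\eta\ge\varepsilon\,g^{ij}\eta_i\eta_j$ with a \emph{uniform} $\varepsilon>0$ does not follow from the Bochner computation alone. In diagonalized coordinates one gets
\[
L_g(-\phi)\ \ge\ 2\sum_{\alpha,i,p}\frac{(u^{\alpha}_{pi})^2}{(1-\lambda_i^2)(1-\lambda_p^2)},
\qquad
g^{ij}(-\phi)_i(-\phi)_j\ =\ 4\sum_i\frac{1}{1-\lambda_i^2}\Big(\sum_p\frac{\lambda_p u^p_{pi}}{1-\lambda_p^2}\Big)^2,
\]
and comparing the two via Cauchy--Schwarz costs a factor $\sup_p(1-\lambda_p^2)^{-1}$, which blows up exactly when the smallest eigenvalue of $g$ tends to $0$. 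Second, Lemma~\ref{sta} itself requires $\liminf_{|x|\to\infty}\nu(x)|x|^2>n$; in pseudo-Euclidean signature $\nu(x)\le1$, so this is again an eigenvalue-lower-bound hypothesis, not a growth hypothesis on $\eta$. Neither is available under the assumptions of Theorem~\ref{thm2}.

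The paper's proof takes a genuinely different route. It works with $f=(\det g)^{-1/2}$ and establishes the pointwise inequality $P(f)\ge|\nabla f|^2/f$ for the intrinsic operator $P=\Delta_M-\langle X,\nabla\cdot\rangle$ (equivalent only to $L_g(-\phi)\ge0$, no uniform $\varepsilon$ needed). It then integrates by parts against the weight $\rho=e^{-|X|^2/2}$ built from the \emph{pseudo}-norm $|X|^2=|x|^2-|u(x)|^2$, obtaining $\int_M\frac{|\nabla f|^2}{f}\eta^2\rho\,d\mu\le4\int_{\R^n}|\nabla\eta|^2(\det g)^{-1}\rho\,dx$ for any cutoff $\eta$. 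The hypothesis $u(0)=0$ is not cosmetic: it is used, via a ray-integration argument exploiting $|\lambda_i|<1$, to prove $|u(x)|\le|x|-1+\kappa$ for large $|x|$, which yields $\rho\le e^{1-(1-\kappa)|x|}$. Only then does the subexponential decay of $\det g$ enter, making the right-hand side vanish as the cutoff exhausts $\R^n$. (Note also that once $u^\alpha_{ij}\equiv0$, the equation $0=-u^\alpha+x\cdot Du^\alpha$ already forces $u^\alpha(0)=0$, so in your outline the hypothesis $u(0)=0$ is never actually used.)
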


This paper is organized as follows. In the second section, we
calculate explicitly the self-shrinking system in higher
codimensional spaces. In the third section, we derive several
Bernstein type results in Euclidean space. In the forth section, we
obtain the sharp growth estimate for self-shrinking function
$u^{\alpha}$ in Euclidean space. Then, we generalize it to some
class of  linear elliptic systems. In the last section, we obtain
two results about space-like self-shrinking graph in
pseudo-Euclidean space with index $m$.

 \vspace{5mm}
 {\bf Acknowledgment:}
The authors wish to express their sincere gratitude to Professor
Y.L. Xin and Professor J.X. Fu for their valuable suggestions and
comments.

\section{The self-shrinking systems}
From now on, Einstein convention of summation over repeated indices
will be adopted. We also assume that Latin indices $i,j,\cdots$,
 Greek indices $\alpha,\beta,\cdots$
 and capital Latin indices $A,B,\cdots$ take
 values in the sets $\{1,\cdots,m\}$, $\{m+1,\cdots,m+n\}$ and
 $\{1,\cdots,m+n\}$ respectively.

Firstly, we calculate the system of the self-shrinkers in higher
codimensional spaces. For the graph $M$, denote
$$X=(x_1,\cdots,x_n,u^1,\cdots,u^m)=(x,u).$$
Let $\{E_A\}_{A=1}^{n+m}$ be the canonical orthonormal basis of
$\R^{n+m}$. Namely,  every component of the vector $E_A$ is $0$,
except that the $A$-th component is $1$. Then
$$e_i=E_i+\sum_{\alpha} u^\alpha_iE_{n+\alpha},\ \ \text{for} \ \  i\in\{1,\cdots,n\}$$ give a tangent frame on $M$.

In Euclidean space, the metric on $M$ is
$$g_{ij}=\langle e_i,e_j\rangle=\delta_{ij}+\sum_{\alpha}u^{\alpha}_iu^{\alpha}_j.$$
Here, $\langle\cdot,\cdot\rangle$ is the canonical inner product in
$\R^{n+m}$. Then there are $m$ linear independent unit normal vectors,
$$n_{\alpha}\ \ =\ \ \frac{1}{(1+|D u^{\alpha}|^2)^{1/2}}(-\sum_{i}u^{\alpha}_iE_i+E_{n+\alpha}),\ \mathrm{for} \ \alpha\in\{1,\cdots,m\}.$$
Note that $\{n_{\alpha}\}$ are not necessarily  orthogonal with each
other. We always denote $D u^{\alpha}=\sum_{i}u^{\alpha}_iE_i$.

Now we define
the $\alpha$-th mean curvature component is
\begin{eqnarray}
H^{\alpha}=\langle H, n_{\alpha}\rangle.
\end{eqnarray}
Then, we have
\begin{eqnarray}
\langle H,
n_{\alpha}\rangle&=&g^{ij}\langle\overline{\nabla}_{e_i}e_j,
n_{\alpha}\rangle
\ \ = \ \ g^{ij}\langle u^\beta_{ij}E_{n+\beta}, n_{\alpha}\rangle\nonumber\\
&=&\dfrac{g^{ij}u^{\alpha}_{ij}}{(1+|D
u^{\alpha}|^2)^{1/2}}.\nonumber
\end{eqnarray}
Then by \eqref{1.2},
\begin{eqnarray}\label{2.2}
\frac{g^{ij}u^{\alpha}_{ij}}{(1+|D
u^{\alpha}|^2)^{1/2}}=H^{\alpha}=-\langle X,
n_{\alpha}\rangle=\frac{-u^{\alpha}+x\cdot D u^{\alpha}}{(1+|D
u^{\alpha}|^2)^{1/2}}.
\end{eqnarray}
We obtain \eqref{meq}.

In pseudo-Euclidean space with index $m$, for the space-like
graphes, the metric and the normal directions are
\begin{eqnarray}\label{pmtc}
g_{ij}=\delta_{ij}-\sum_{\alpha}u^{\alpha}_iu^{\alpha}_j,\ \ \text{
and }\ \ n_{\alpha}=\frac{1}{(1-|D
u^{\alpha}|^2)^{1/2}}(\sum_{\alpha}u^{\alpha}_iE_i+E_{n+\alpha}).
\end{eqnarray}
Then we obtain a similar system as \eqref{meq}, where we replace the
metric $g_{ij}$ by \eqref{pmtc}.
\begin{remark}
For a Lagrangian graph, $m=n$, and there is some potential function
$v$ (c.f. \cite{HL}) , such that  $$u^{\alpha}= \frac{\partial
v}{\partial x_{\alpha}}.$$ Then inserting it into \eqref{meq}, and
integrating, we obtain the equations of Lagrangian self-shrinkers in
Euclidean and pseudo-Euclidean space. They are
\begin{eqnarray}\label{2.4}
tr\arctan D^2v=-2v+x\cdot D v,
\end{eqnarray}
or
\begin{eqnarray}\label{2.5}
\frac{1}{2}tr\ln \frac{I+D^2v}{I-D^2v}=-2v+x\cdot D v,
\end{eqnarray}
respectively. Here, $tr$ means taking the trace of  matrices.
\end{remark}
For the first equation, we let $$w(y)=2v(\frac{1}{\sqrt{2}}y).$$
Then \eqref{2.4} becomes the  equation (2) in \cite{ACH2}. For the
second equation, first, we let
$$\eta(y)=\frac{4}{n}v(\frac{\sqrt{n}}{2}y).$$ The equation becomes
$$tr\ln \frac{I+D^2\eta}{I-D^2\eta}=n(-\eta+\frac{1}{2}y\cdot D \eta) .$$
 Then, using Lewy rotation \cite{Y},
\begin{equation}
\left\{ \begin{aligned}\bar{x}&=\frac{x-D \eta(x)}{\sqrt{2}} \\
D w(\bar{x})&=\frac{x+D \eta(x)}{\sqrt{2}}
\end{aligned} \right.,\nonumber
\end{equation}
we get
$$\ln \det D^2 w=tr\ln D^2 w =n(-w+\frac{1}{2}\bar{x}\cdot D
w).$$ In the last equality, we use a similar trick appearing in
\cite{HW}. Then we obtain the self-shrinking equations of Lagrangian
mean curvature flow in pseudo-Euclidean space. This equation appears
 in \cite{HW} and \cite{CCY} at first.

\section{Some Bernstein type results in Euclidean space}
For any function $\phi$ on $\mathbb{R}^n$, denote an operator
\begin{eqnarray}
L_a\phi&=&a^{ij}\phi_{ij}-x\cdot D \phi.
\end{eqnarray}
Here, $(a^{ij})$ is the inverse  of a positive definite matrix
$(a_{ij})$ at every point in $\R^n$. The critical point to obtain
Bernstein type results is the following lemma. It seems that the
second inequality of the following lemma similar to some stability
condition. The proof of the lemma slightly modifies from the last
part of \cite{HW}. But for the readers' convenient, we include it
here.
\begin{lemma}\label{sta}
Let the minimum eigenvalue of the matrix $(a_{ij})$ be $\nu(x)$.
Assume
\begin{eqnarray}\label{lemmacond}
\liminf_{|x|\rightarrow +\infty}\nu(x)|x|^2>n.
\end{eqnarray}
For any smooth function $\phi$, if there is a small positive
constant $\varepsilon$, such that,
$$L_a\phi\ \ \geqq \ \ \varepsilon a^{ij}\phi_i\phi_j,$$
then $\phi$ is a constant.
\end{lemma}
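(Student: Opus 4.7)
The plan is to replace the quadratic-gradient stability hypothesis by a \emph{linear} $L_a$-subharmonicity for a bounded-above auxiliary function, and then close with a barrier plus strong-maximum-principle argument. First I would introduce the exponential substitution
\[
u\;:=\;-\varepsilon^{-1}e^{-\varepsilon\phi},
\]
so that $u<0$ everywhere. The direct computation $u_i=e^{-\varepsilon\phi}\phi_i$ and $u_{ij}=e^{-\varepsilon\phi}(\phi_{ij}-\varepsilon\phi_i\phi_j)$ yields
\[
L_a u \;=\; e^{-\varepsilon\phi}\bigl(L_a\phi-\varepsilon\,a^{ij}\phi_i\phi_j\bigr)\;\geq\;0,
\]
so the quadratic term in the hypothesis is exactly absorbed, and $u$ becomes an $L_a$-subsolution that is automatically bounded above (by $0$).

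Next I would use $|x|^2/2$ as a barrier. Since $(a^{ij})$ is the inverse of $(a_{ij})$, whose smallest eigenvalue is $\nu$, the trace of $(a^{ij})$ is at most $n/\nu$, giving
\[
L_a(|x|^2/2)\;=\;\sum_i a^{ii}-|x|^2\;\leq\;\frac{n}{\nu(x)}-|x|^2.
\]
Hypothesis \eqref{lemmacond} furnishes $\delta>0$ and $R_0>0$ with $\nu(x)|x|^2\geq n+\delta$ for $|x|\geq R_0$, whence $L_a(|x|^2/2)\leq-\tfrac{\delta}{n+\delta}|x|^2<0$ outside $B_{R_0}$. For any $c>0$ I would then look at $v_c:=u-\tfrac{c}{2}|x|^2$: since $u\leq 0$ and $-\tfrac{c}{2}|x|^2\to-\infty$, $v_c$ attains its global maximum at some interior $x^*\in\R^n$. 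At $x^*$, $Dv_c=0$ (so $x\cdot Dv_c=0$) and $a^{ij}(v_c)_{ij}\leq 0$ together force $L_a v_c(x^*)\leq 0$; on the other hand $L_a v_c=L_a u-c\,L_a(|x|^2/2)>0$ for $|x|\geq R_0$, so $|x^*|<R_0$. Combining $v_c(x)\leq v_c(x^*)\leq\max_{\overline B_{R_0}}u$ and letting $c\downarrow 0$ yields
\[
\sup_{\R^n}u\;=\;\max_{\overline B_{R_0}}u,
\]
a value actually attained on the compact set $\overline B_{R_0}$.

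Finally I would invoke the strong maximum principle: since $L_a u\geq 0$ and $u$ attains its supremum at an interior point, and since $L_a$ is locally uniformly elliptic with smooth coefficients (as in the intended application, where $a^{ij}=g^{ij}$), $u$ must be constant throughout $\R^n$, whence $\phi$ is constant. The delicate point I expect is the barrier step: one really needs the \emph{strict} liminf in \eqref{lemmacond} to produce a quantitative strict negativity of $L_a(|x|^2/2)$ outside some ball, without which the interior maximum of $v_c$ could escape to infinity and the argument would collapse. The exponential substitution in Step~1 is the conceptual move that makes the stability-style hypothesis tractable by these classical Hopf-type tools; everything else is essentially routine.
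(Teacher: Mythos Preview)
Your argument is correct, and it follows a genuinely different path from the paper's proof. The paper works directly with the product $\eta\,e^{C\phi}$, where $\eta$ is a piecewise-defined cutoff equal to $1$ on $B_{R_0}$ and decaying like $-k(|x|^2-R_0^2)+1$ outside; it then chooses the parameters $C$ (small, so that $C\varepsilon-C^2\geq 0$ absorbs the quadratic gradient term) and $k$ (small, via a two-radius comparison $\phi(p_1)<\phi(p_2)$ on $\partial B_{R_0}$ and $\partial B_{\sqrt{R_0^2+1}}$) to force the maximum of $\eta\,e^{C\phi}$ into the annulus where a sign contradiction arises. Your route instead front-loads the exponential substitution $u=-\varepsilon^{-1}e^{-\varepsilon\phi}$ to reduce the nonlinear hypothesis to the clean linear statement $L_au\geq 0$ with $u$ bounded above, after which the quadratic barrier $|x|^2/2$ (whose $L_a$-image is strictly negative outside $B_{R_0}$ precisely by \eqref{lemmacond}) and a limit $c\downarrow 0$ replace the paper's cutoff gymnastics. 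The gain is transparency: once linearized, your maximum-location step and the final strong maximum principle are textbook, and you avoid the somewhat delicate two-radius inequality the paper needs to pin down the maximizer. Both proofs ultimately rest on the same two ingredients (exponential absorption of the gradient-squared term, and the eigenvalue condition to control $\sum_i a^{ii}$ against $|x|^2$), so neither is more general here; yours is simply more streamlined.
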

\begin{proof}
For $0<k<1$, let
\begin{eqnarray*}\label{e4.1}
\eta(x)=\left\{\begin{matrix}1& |x|\leqq R_0\\
-k(|x|^2-R_0^2)+1 & |x|\geqq R_0\end{matrix}\right..
\end{eqnarray*}
Here $R_0$ is a constant which will be determined later. Then the
function $\eta e^{C\phi}$ achieves its maximum in the bounded set
$$\{x\in \mathbb{R}^n;\eta>0\},$$ where $C$ is a positive constant which also will be determined later. If the maximum point $p$ is in
the set $\{x\in \R^n;|x|>R_0\}$, then at $p$, we have
\begin{eqnarray}\label{3.3}
\eta_i+C\eta\phi_i=0.
\end{eqnarray}
At $p$, using \eqref{3.3}, we have,
\begin{eqnarray}\label{compute}
e^{-C\phi}a^{ij}(\eta
e^{C\phi})_{ij}&=&a^{ij}\eta_{ij}+2Ca^{ij}\eta_i\phi_j+C\eta
a^{ij}\phi_{ij}+C^2\eta a^{ij}\phi_{i}\phi_j\\
&\geqq&-2k\sum_ia^{ii}-C^2\eta a^{ij}\phi_{i}\phi_j+C\eta
(x\cdot D\phi+\varepsilon a^{ij}\phi_i\phi_j)\nonumber\\
&=&-x\cdot D\eta-2k\sum_ia^{ii}+(C\varepsilon-C^2)\eta
a^{ij}\phi_{i}\phi_j\nonumber\\
&=&2k(|x|^2-\sum_ia^{ii})+(C\varepsilon-C^2)\eta
a^{ij}\phi_{i}\phi_j\nonumber.
\end{eqnarray}
By condition \eqref{lemmacond}, there is a sufficient large radius
$R_1$, such that in $\mathbb{R}^n \setminus B_{R_1}$, we have
$$a^{ii}<\frac{|x|^2}{n},$$ for any $i$, where $B_{R_1}$ denotes an open ball of radius $R_1$ centred at origin.
Using \eqref{compute} and taking the constant $C$ sufficient small,
we obtain a contradiction if $p\in (\mathbb{R}^n\setminus
B_{R_1})\bigcap \{x\in\R^n;|x|>R_0\}$.

Assume that the function $\phi$ is not a constant in $\mathbb{R}^n$.
Then there is a ball $B_{R_0}$ with radius $R_0\geqq R_1$, such that
the function $\phi$ is not a constant in $B_{R_0}$. Suppose that
$\phi$ achieves its maximum value in $B_{R_0}$. Since $L_a\phi\geqq
0$, applying strong maximum principle, we obtain $\phi$ is a
constant, which is a contradiction. Hence, $\phi$ achieves its
maximum value only on the boundary $\partial B_{R_0}$. Similarly, in
$B_{\sqrt{R_0^2+1}}$, $\phi$ also achieves its maximum value only on
the boundary $\partial B_{\sqrt{R_0^2+1}}$. We assume that the
points $p_1$ and $p_2$ are maximum value points with respect to
$\partial B_{R_0}$ and $\partial B_{\sqrt{R_0^2+1}}$, namely,
\begin{eqnarray}
\max_{\overline{B}_{R_0}}\phi=\phi (p_1),\ \
\max_{\overline{B}_{\sqrt{R_0^2+1}}}\phi=\phi (p_2).\nonumber
\end{eqnarray}
Then
\begin{eqnarray}
\phi (p_1)&\leqq&\phi (p_2).\nonumber
\end{eqnarray}
But the equality is not valid. In fact, if the equality holds, then
the function $\phi$ achieves  its maximum value in the interior of
the domain $B_{\sqrt{R_0^2+1}}$, which is a contradiction. Thus, we
can choose $k$ sufficiently small, such that
\begin{eqnarray}
(\eta e^{C\phi})(p_1) = (e^{C\phi})(p_1)<((1-k)e^{C\phi})(p_2)=(\eta
e^{C\phi})(p_2).\nonumber
\end{eqnarray}
This means that, for fixed $\phi$, we can choose suitable $k$, such
that the maximum value of $\eta e^{C\phi}$ only occurs in the set
$$\{x\in\mathbb{R}^n||x|>R_0\geqq R_1\}.$$ But we have proved that it is
impossible. Thus, all above discussions imply the function $\phi$
should be a constant.
\end{proof}
\begin{remark}
If the matrix $(a_{ij})$ is the induced metric of a graph in
Euclidean space, we can drop the condition \eqref{lemmacond}. In
fact, Lemma \ref{sta} says that any SSH-function should be a
constant in Euclidean space.
\end{remark}
In \cite{CCY}, A. Chau, J.Y. Chen and Y. Yuan proved the following
theorem. Here, we give another proof.
\begin{thm}
For a Lagrangian graph, every self-shrinker should be a linear
subspace.
\end{thm}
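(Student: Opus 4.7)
The plan is to apply Lemma~\ref{sta} to a bounded function built from the Lagrangian angle, whose very existence and boundedness are special to the Lagrangian case. For a Lagrangian self-shrinker graph with potential $v$, equation \eqref{2.4} reads
\begin{equation*}
\theta := \operatorname{tr}\arctan D^2 v = -2v + x \cdot Dv,
\end{equation*}
and the range of $\theta$ always lies in $(-n\pi/2, n\pi/2)$. The induced metric is $g_{ij} = \delta_{ij} + (D^2 v\cdot D^2 v)_{ij}$, and the standard identity $\partial_k\theta = g^{ij} v_{ijk}$ (obtained by differentiating $\operatorname{tr}\arctan A$ in $A$) will be the analytic engine.

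The central observation is that $\theta$ is annihilated by the self-shrinker operator $L_g$. Differentiating the equation above gives $\theta_i = -v_i + x_k v_{ki}$ and then $\theta_{ij} = x_k v_{kij}$, whence
\begin{equation*}
g^{ij}\theta_{ij} = x_k\, g^{ij} v_{ijk} = x_k\theta_k = x\cdot D\theta,
\end{equation*}
i.e.\ $L_g\theta \equiv 0$. To manufacture the quadratic term required by the SSH condition, I would compose with a convex function. Fixing any $C > n\pi/2$, set
\begin{equation*}
\phi := -\tfrac{1}{\varepsilon}\log(C-\theta),
\end{equation*}
which is smooth and bounded thanks to the bound on $\theta$. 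Writing $\phi = f(\theta)$ with $f'(s) = 1/[\varepsilon(C-s)] > 0$ and $f''(s) = \varepsilon(f'(s))^2$, a short computation yields
\begin{equation*}
L_g\phi = f'(\theta)\, L_g\theta + f''(\theta)\, g^{ij}\theta_i\theta_j = \varepsilon\, g^{ij}\phi_i\phi_j,
\end{equation*}
so $\phi$ is SSH.

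Since $g$ is the induced metric of a graph in Euclidean space, the remark after Lemma~\ref{sta} allows us to drop the hypothesis \eqref{lemmacond}, and Lemma~\ref{sta} forces $\phi$ (hence $\theta$) to be constant. The shrinker equation then reduces to $x\cdot Dv - 2v \equiv c$; differentiating once gives $x\cdot Dv_i = v_i$, so each $v_i$ is a smooth function on $\mathbb{R}^n$ that is homogeneous of degree one, and smoothness at the origin forces $v_i$ to be linear. Thus $v$ is a quadratic polynomial, every $u^{\alpha}=v_\alpha$ is linear, and $M$ is a linear subspace. The main obstacle is conceptual rather than computational: the natural scalar quantity attached to the equation, namely $\theta$, is only $L_g$-harmonic and not itself SSH. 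One has to exploit the fact that the Lagrangian angle is a priori bounded to compose with a logarithm and promote harmonicity to the SSH inequality; this composition step is precisely where the Lagrangian structure is used.
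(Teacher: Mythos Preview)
Your proof is correct and follows essentially the same approach as the paper: both exploit that the Lagrangian angle $\Theta$ satisfies $L_g\Theta=0$ and is a priori bounded, then compose with a convex function to obtain an SSH function and invoke Lemma~\ref{sta}. The only cosmetic differences are that the paper uses $e^{\Theta}$ (with $e^{-\Theta}\ge\varepsilon$ from boundedness) rather than your $-\tfrac{1}{\varepsilon}\log(C-\theta)$, and the paper defers the final ``$\Theta$ constant $\Rightarrow$ $M$ linear'' step to \cite{CCY} whereas you spell out the Euler-homogeneity argument directly.
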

\begin{proof}
By \cite{CCY}, the phrase function $\Theta$ of a Lagrangian graph
satisfies
$$g^{ij}\Theta_{ij}-x\cdot D \Theta=0.$$ Then
\begin{eqnarray}
g^{ij}(e^{\Theta})_{ij}-x\cdot D (e^{\Theta})=e^{-\Theta}
g^{ij}(e^{\Theta})_i(e^{\Theta})_j.\nonumber
\end{eqnarray}
Since $\Theta$ is bounded, $e^{\Theta}$ is SSH. By Lemma \ref{sta},
$\Theta$ is a constant. Then, by the same argument of \cite{CCY}, we
obtain the result.
\end{proof}
A simple result is the following.
\begin{prop}\label{4}
If $f$ is a nonnegative function satisfying
\begin{eqnarray}\label{fequ}
g^{ij}f_{ij}=-f+x\cdot D f,\end{eqnarray} then $f\equiv0$.
\end{prop}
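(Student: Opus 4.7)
The plan is to reduce Proposition \ref{4} to the constancy statement in Lemma \ref{sta} by exhibiting an appropriate SSH-function built from $f$. The natural candidate is
$$\phi := e^{-f},$$
which has the advantage that, since $f\geqq 0$, the function $\phi$ ranges in $(0,1]$, and differentiating it introduces exactly one factor of $f_i$, matching the quadratic gradient term in the SSH-condition.

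A direct computation using $\phi_i=-e^{-f}f_i$ and $\phi_{ij}=e^{-f}(f_if_j-f_{ij})$ gives
\begin{equation*}
L_g\phi \;=\; g^{ij}\phi_{ij}-x\cdot D\phi \;=\; e^{-f}\bigl(g^{ij}f_if_j - L_gf\bigr).
\end{equation*}
Substituting the equation $L_gf=-f$ from \eqref{fequ} and rewriting $e^{-f}g^{ij}f_if_j=e^{f}\,g^{ij}\phi_i\phi_j$, I get
\begin{equation*}
L_g\phi \;=\; e^{-f}f \;+\; e^{f}\,g^{ij}\phi_i\phi_j.
\end{equation*}
Since $f\geqq 0$, the first term is nonnegative and $e^{f}\geqq 1$, so
\begin{equation*}
L_g\phi \;\geqq\; g^{ij}\phi_i\phi_j,
\end{equation*}
i.e.\ $\phi$ is SSH with $\varepsilon=1$.

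To apply Lemma \ref{sta}, I need the eigenvalue hypothesis \eqref{lemmacond} on $(g_{ij})$; but for the induced metric of a graph in Euclidean space, $g_{ij}=\delta_{ij}+u^{\alpha}_i u^{\alpha}_j$, whose minimum eigenvalue is at least $1$, so $\nu(x)|x|^2\to\infty$ (alternatively, one may invoke the remark immediately after Lemma \ref{sta}). Hence the lemma gives $\phi\equiv\textup{const}$, i.e.\ $f$ is a constant. Plugging a constant $f$ back into \eqref{fequ} yields $0=-f$, so $f\equiv 0$.

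There is no real obstacle here; the only content is choosing the right transformation of $f$. The trial $\phi=f$ or $\phi=f^2$ produces the wrong sign or only a zeroth-order inequality, whereas $\phi=e^{-f}$ simultaneously converts $L_gf=-f$ into a nonnegative bulk term and creates the gradient-squared term with the correct sign; the nonnegativity of $f$ is precisely what makes the resulting inequality SSH rather than merely $L_g\phi\geqq 0$.
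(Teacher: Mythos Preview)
Your proof is correct and essentially identical to the paper's: both set $\phi=e^{-f}$, compute $L_g\phi=e^{-f}f+e^{f}g^{ij}\phi_i\phi_j\geqq g^{ij}\phi_i\phi_j$, invoke Lemma~\ref{sta} (together with the remark that the Euclidean graph metric automatically satisfies the eigenvalue hypothesis), and conclude $f$ is a constant forced to be $0$ by \eqref{fequ}.
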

\begin{proof}
Since $f\geqq 0$, we consider the function
$$\phi=e^{-f}.$$ Then, we have
\begin{eqnarray}
g^{ij}\phi_{ij}&=&e^{-f}g^{ij}f_if_j-e^{-f}g^{ij}f_{ij}\nonumber\\
 &=&e^{-f}g^{ij}f_if_j+e^{-f}f-e^{-f}x\cdot D f\nonumber.
\end{eqnarray}
It is
\begin{eqnarray}
g^{ij}\phi_{ij}-x\cdot D
\phi&=&e^{-f}g^{ij}f_if_j+e^{-f}f\nonumber\\
&\geqq& e^{-2f}e^{f}g^{ij}f_if_j\nonumber\\
&\geqq&g^{ij}\phi_i\phi_j\nonumber.
\end{eqnarray}
So $\phi$ is SSH. By Lemma \ref{sta}, we obtain that  $e^{-f}$ is a
constant. Namely, $f$ is a constant. But \eqref{fequ} tells us that
the constant must be $0$.
\end{proof}

\begin{cor}
For the system \eqref{meq}, assume that every $u^{\alpha}$ is only
in one side of some hyperplane. Namely, there are constant vectors
$b^{\alpha}$, such that
$$u^{\alpha}\geqq b^{\alpha}\cdot x,\ \ \text{for}\ \  \alpha=1,\cdots, m.$$ Then $M$ is a linear subspace.
\end{cor}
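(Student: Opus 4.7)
The plan is to reduce this corollary to a direct application of Proposition~\ref{4}. For each $\alpha$, set
\[
f^{\alpha}(x) \ = \ u^{\alpha}(x) - b^{\alpha}\cdot x.
\]
By hypothesis $f^{\alpha}\geqq 0$ on $\mathbb{R}^n$. I would then check that $f^{\alpha}$ satisfies the same elliptic equation \eqref{fequ} with respect to the induced metric $g_{ij}=\delta_{ij}+\sum_\beta u^{\beta}_iu^{\beta}_j$ of the graph of $u$. The key point is that the equation is invariant under adding a linear function to $u^{\alpha}$: since $b^{\alpha}\cdot x$ is linear, its Hessian vanishes, so $g^{ij}f^{\alpha}_{ij}=g^{ij}u^{\alpha}_{ij}=-u^{\alpha}+x\cdot Du^{\alpha}$, while $-f^{\alpha}+x\cdot Df^{\alpha}=-u^{\alpha}+b^{\alpha}\cdot x+x\cdot Du^{\alpha}-b^{\alpha}\cdot x=-u^{\alpha}+x\cdot Du^{\alpha}$. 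The two expressions coincide, so $f^{\alpha}$ solves \eqref{fequ} with the same matrix $(g^{ij})$ that arises from the original graph.

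Now, since $(g_{ij})$ is the induced metric of a graph in Euclidean space, its eigenvalues are all $\geqq 1$, and by the Remark following Lemma~\ref{sta} the condition \eqref{lemmacond} is automatic for this operator. Proposition~\ref{4} therefore applies to the nonnegative function $f^{\alpha}$ and yields $f^{\alpha}\equiv 0$. Doing this for each $\alpha=1,\ldots,m$, I conclude that $u^{\alpha}(x)=b^{\alpha}\cdot x$ for every $\alpha$, so $M$ is the linear subspace parametrized by $x\mapsto(x,b^{1}\cdot x,\ldots,b^{m}\cdot x)$.

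There is no real obstacle: the only thing that needs care is confirming that adding a linear function to $u^{\alpha}$ preserves the shrinker equation in the form \eqref{fequ} while keeping the metric $g$ fixed (the metric depends on $u^{\alpha}$, not on $f^{\alpha}$, which is why we must apply Proposition~\ref{4} with respect to the original graph's metric rather than any metric induced by $f^{\alpha}$). Once this invariance is noted, the result is immediate from Proposition~\ref{4}.
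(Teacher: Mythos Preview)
Your proposal is correct and follows exactly the same approach as the paper: set $f=u^{\alpha}-b^{\alpha}\cdot x$ and apply Proposition~\ref{4}. The paper's proof is a single line that leaves the verification of \eqref{fequ} for $f$ implicit, whereas you spell it out (including the important observation that the metric $g$ remains the one induced by $u$); both arguments are otherwise identical.
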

\begin{proof}
In Proposition \ref{4}, let $f=u^{\alpha}-b^{\alpha}\cdot x$. Then
$f=0$, this means that $u^{\alpha}$ is a linear function and $M$ is
a linear subspace.
\end{proof}

A possible SSH-function is the volume element function . See
\cite{Wang1} and \cite{Y}. In Euclidean space, we let
\begin{eqnarray}\label{3.6}
\phi=\ln \det (g_{ij}).
\end{eqnarray}
Then
\begin{eqnarray}\label{3.1}
g^{ij}\phi_{ij}
&=&-2\sum_{\alpha,\beta}g^{ij}g^{pk}g^{ql}u^{\alpha}_{pi}u^{\alpha}_qu^{\beta}_{kj}u^{\beta}_l
-2\sum_{\alpha,\beta}g^{ij}g^{pk}g^{ql}u^{\alpha}_{qi}u^{\alpha}_pu^{\beta}_{kj}u^{\beta}_l\\
&&+2\sum_{\alpha}g^{ij}g^{pq}u^{\alpha}_{pi}u^{\alpha}_{qj}+2\sum_{\alpha}g^{ij}g^{pq}u^{\alpha}_{pij}u^{\alpha}_{q}.\nonumber
\end{eqnarray}
By system \eqref{meq}, we have
\begin{eqnarray}
g^{ij}u^{\alpha}_{ijp}&=&x\cdot D
u^{\alpha}_p+g^{ik}g^{jl}g_{klp}u^{\alpha}_{ij}\nonumber\\
&=&x\cdot D
u^{\alpha}_p+2\sum_{\beta}g^{ik}g^{jl}u^{\beta}_{kp}u^{\beta}_lu^{\alpha}_{ij}\nonumber.
\end{eqnarray}
Then by \eqref{3.1}, we get
\begin{eqnarray}
g^{ij}\phi_{ij}&=&-2\sum_{\alpha,\beta}g^{ij}g^{pk}g^{ql}u^{\alpha}_{pi}u^{\alpha}_qu^{\beta}_{kj}u^{\beta}_l
+2\sum_{\alpha,\beta}g^{ij}g^{pk}g^{ql}u^{\alpha}_{qi}u^{\alpha}_pu^{\beta}_{kj}u^{\beta}_l\nonumber\\
&&+2\sum_{\alpha}g^{ij}g^{pq}u^{\alpha}_{pi}u^{\alpha}_{qj}
+2\sum_{\alpha}g^{pq}u^{\alpha}_{q}x\cdot D u^{\alpha}_p\nonumber.
\end{eqnarray}
It is
\begin{eqnarray}
L_g(\phi)&=&-2\sum_{\alpha,\beta}g^{ij}g^{pk}g^{ql}u^{\alpha}_{pi}u^{\alpha}_qu^{\beta}_{kj}u^{\beta}_l
+2\sum_{\alpha,\beta}g^{ij}g^{pk}g^{ql}u^{\alpha}_{qi}u^{\alpha}_pu^{\beta}_{kj}u^{\beta}_l\nonumber\\
&&+2\sum_{\alpha}g^{ij}g^{pq}u^{\alpha}_{pi}u^{\alpha}_{qj}.\nonumber
\end{eqnarray}
By simply calculation, the operator $L_g$ is invariant under the
orthonormal transformations from $\R^n$ to $\R^n$ and from $\R^m$ to
$\R^m$. Then,  at any fixed point, we can choose a coordinate system
$\{x_1,\cdots,x_n\}$ on $\R^n$ and $\{u^1,\cdots,u^m\}$ on $\R^m$,
such that
\begin{equation}\label{diag}
\frac{\partial u^\alpha}{\partial x_i}=\lambda_i\delta^\alpha_i,
\end{equation}
where $\lambda_i\geqq 0$ are the singular values of $du$, and
$\lambda_i=0$ for $i\in\{\min\{m,n\}+1,\cdots,\max\{m,n\}\}$. If
$n>m$, we let $u^A_{ij}=0$ for convenience, where
$A\in\{m+1,\cdots,n\}$. Then, we get
\begin{eqnarray}\label{3.8}
&L_g(\phi)=&-2\sum_{i,p,q}\frac{\la_q^2(u^{q}_{pi})^2}{(1+\lambda^2_i)(1+\lambda^2_p)(1+\lambda^2_q)}
+2\sum_{\alpha,i,p}\frac{(u^{\alpha}_{pi})^2}{(1+\lambda^2_i)(1+\lambda^2_p)}\\
&&+2\sum_{i,p,q}\frac{\la_p\la_qu^p_{qi}u^q_{pi}}{(1+\lambda^2_i)(1+\lambda^2_p)(1+\lambda^2_q)}.
\nonumber
\end{eqnarray}

Now we have the following Theorem.
\begin{thm}\label{10}
For the system \eqref{meq},  if one of the following three
assumptions holds:

\noindent (i) $\lambda_i\lambda_j\leqq 1$ for any $i\neq j$;

\noindent (ii) $\det(g_{ij})\leqq \beta< 9$, where $\beta$ is a
positive constant;

\noindent (iii)
$u^{\alpha}_{qi}g^{ij}u^{\beta}_{jp}=u^{\beta}_{qi}g^{ij}u^{\alpha}_{jp}$
for any $\alpha, \beta$;

\noindent then $M$ is a linear subspace.
\end{thm}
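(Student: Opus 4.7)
The plan is to establish, under each of the three hypotheses, that the volume element function $\phi=\ln\det(g_{ij})$ is an SSH function, then invoke Lemma~\ref{sta} (via the Remark immediately following it, which removes the eigenvalue condition \eqref{lemmacond} for the induced metric of a graph) to conclude that $\phi$ is constant. A careful inspection of the SSH identity will then force $u^\alpha_{ij}\equiv 0$, after which the self-shrinking equation \eqref{meq} gives $u^\alpha(x)=b^\alpha\cdot x$, so that $M$ is a linear subspace.

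Starting from formula \eqref{3.8} in the diagonalized coordinates \eqref{diag}, I observe that the diagonal ($p=q$) contributions of the first and third terms of \eqref{3.8} cancel exactly, while the off-diagonal ($p\neq q$) parts combine into a perfect square, giving the unconditional identity
$$L_g(\phi) \;=\; T_2 - T_-,$$
where
$$T_2 := 2\sum_{\alpha,i,p}\frac{(u^\alpha_{pi})^2}{(1+\lambda_i^2)(1+\lambda_p^2)}, \qquad T_- := \sum_{i,\,p\neq q}\frac{(\lambda_p u^p_{qi}-\lambda_q u^q_{pi})^2}{(1+\lambda_i^2)(1+\lambda_p^2)(1+\lambda_q^2)}.$$
A parallel computation gives
$$g^{ij}\phi_i\phi_j \;=\; 4\sum_i\frac{1}{1+\lambda_i^2}\left(\sum_\alpha\frac{\lambda_\alpha u^\alpha_{\alpha i}}{1+\lambda_\alpha^2}\right)^2.$$
The task in each case reduces to producing a gap $L_g(\phi)\geq \varepsilon g^{ij}\phi_i\phi_j$ for some $\varepsilon>0$.

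Case (iii) is the cleanest: in the diagonal coordinates the commutation hypothesis reads $\sum_i u^p_{qi}u^q_{pi}/(1+\lambda_i^2)=\sum_i u^p_{pi}u^q_{qi}/(1+\lambda_i^2)$, which transforms the third term of \eqref{3.8} exactly into $\tfrac{1}{2}g^{ij}\phi_i\phi_j$; combined with the pointwise bound $\lambda_\alpha^2/(1+\lambda_\alpha^2)\leq 1$, which collapses the sum of the first two terms of \eqref{3.8} to the manifestly nonnegative $2\sum_{\alpha,i,p}(u^\alpha_{pi})^2/[(1+\lambda_i^2)(1+\lambda_p^2)(1+\lambda_\alpha^2)]$, this yields $L_g(\phi)\geq \tfrac{1}{2}g^{ij}\phi_i\phi_j$. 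Cases (i) and (ii) rely instead on weighted Cauchy--Schwarz absorptions of $T_-$ into $T_2$ in the style of the minimal-submanifold arguments of \cite{JXY,SWX,Wang1}: the hypothesis $\lambda_p\lambda_q\leq 1$ ($p\neq q$) in case (i), or the consequence $\lambda_p^2\lambda_q^2\leq\beta-1<8$ of $(1+\lambda_p^2)(1+\lambda_q^2)\leq\det(g_{ij})\leq\beta$ in case (ii), is precisely what allows the cross terms in $T_-$ to be bounded by a strict fraction of $T_2$, with the numerical threshold $\beta<9$ of case (ii) tightly tied to the strict positivity of the resulting $\varepsilon$.

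Once $\phi$ is constant, $g^{ij}\phi_i\phi_j=0$ and $L_g(\phi)=0$, so the SSH inequality collapses to equality. In case (iii) this forces $\sum_{\alpha,i,p}(u^\alpha_{pi})^2/[(1+\lambda_i^2)(1+\lambda_p^2)(1+\lambda_\alpha^2)]=0$; in cases (i) and (ii) the nonnegative residue $T_2-T_-$ left by the absorption estimate vanishes; in every case one reads off $u^\alpha_{ij}\equiv 0$. Then each $u^\alpha(x)=b^\alpha\cdot x+c^\alpha$ is affine, and substituting into \eqref{meq} yields $c^\alpha=0$, so $u^\alpha$ is linear and $M$ is a linear subspace. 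I expect the main obstacle to be the sharp algebraic absorption in case (ii): the constant $9$ appears to be intrinsic to the identity, and a careful accounting of the off-diagonal mixing is needed to extract a positive $\varepsilon$ without slack.
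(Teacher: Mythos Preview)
Your treatment of cases (i) and (iii) is essentially the paper's argument: the decomposition $L_g(\phi)=T_2-T_-$ is exactly what the paper obtains from \eqref{3.8}, and the absorption under $\lambda_p\lambda_q\leq1$ (giving $L_g\phi\geq\tfrac{1}{2n}g^{ij}\phi_i\phi_j$) and the commutation rewrite in (iii) (giving $L_g\phi\geq\tfrac{1}{2}g^{ij}\phi_i\phi_j$) match the paper line for line.

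The gap is in case (ii). You propose to show that $\phi=\ln\det g$ itself is SSH using the consequence $\lambda_p^2\lambda_q^2\leq\beta-1<8$, but this does not work: symmetrizing your own identity gives
\[
T_2-T_-=T_2^{\mathrm{diag}}+\sum_{i,\,p\neq q}\frac{(u^q_{pi})^2+(u^p_{qi})^2+2\lambda_p\lambda_q u^p_{qi}u^q_{pi}}{(1+\lambda_i^2)(1+\lambda_p^2)(1+\lambda_q^2)},
\]
and the off-diagonal quadratic form $(a,b)\mapsto a^2+b^2+2\lambda_p\lambda_q ab$ is nonnegative precisely when $|\lambda_p\lambda_q|\leq1$; it becomes indefinite as soon as some $\lambda_p\lambda_q>1$, which is certainly allowed under $\det g\leq\beta<9$. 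So no positive $\varepsilon$ can be extracted for $\phi$ in general, and the constant $8$ in your bound plays no role in this inequality. The paper resolves this by applying the operator not to $\phi$ but to $e^{\phi/2}=\sqrt{\det g}$: the chain rule produces the extra term $\tfrac14 g^{ij}\phi_i\phi_j=\sum_{i,p,q}\lambda_p\lambda_q u^p_{pi}u^q_{qi}/[(1+\lambda_i^2)(1+\lambda_p^2)(1+\lambda_q^2)]$, and it is precisely these additional \emph{diagonal--diagonal} cross terms $u^p_{pi}u^q_{qi}$ that, together with the off-diagonal ones, reproduce the quadratic form analyzed in \cite{JXY}; the paper then quotes Theorem~3.1 of \cite{JXY} to obtain $L_g(e^{\phi/2})\geq K_0|B|^2\geq\varepsilon\,g^{ij}(e^{\phi/2})_i(e^{\phi/2})_j$, and this is where the threshold $9$ genuinely enters. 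Once $e^{\phi/2}$ is constant, $|B|=0$ follows. So your plan for (ii) needs to switch the test function from $\phi$ to $e^{\phi/2}$ (equivalently, from $\ln\det g$ to $\sqrt{\det g}$); the rest of your outline then goes through.
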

\begin{proof}

\noindent (i) Since
$$2\la_p\la_qu^p_{qi}u^q_{pi}\leqq2|u^p_{qi}u^q_{pi}|\leqq (u^p_{qi})^2+(u^q_{pi})^2,$$
for $p\neq q$, then by \eqref{3.8}, we get
\begin{eqnarray}
L_g\phi&\geqq&
-2\sum_{i,p,q}\frac{\la_q^2(u^{q}_{pi})^2}{(1+\lambda^2_i)(1+\lambda^2_p)(1+\lambda^2_q)}
+2\sum_{i,p,q}\frac{(u^q_{pi})^2(1+\lambda^2_q)}{(1+\lambda^2_i)(1+\lambda^2_p)(1+\lambda^2_q)}\\
&&-2\sum_{i,p\neq
q}\frac{(u^q_{pi})^2}{(1+\lambda^2_i)(1+\lambda^2_p)(1+\lambda^2_q)}
+2\sum_{i,p}\frac{\la_p^2(u^p_{pi})^2}{(1+\lambda^2_i)(1+\lambda^2_p)^2}
\nonumber\\
&=&2\sum_{i,p}\frac{(u^p_{pi})^2}{(1+\lambda^2_i)(1+\lambda^2_p)^2}
+2\sum_{i,p}\frac{\la_p^2(u^p_{pi})^2}{(1+\lambda^2_i)(1+\lambda^2_p)^2}\nonumber\\
&\geqq&\sum_i\frac{2}{n(1+\lambda^2_i)}\left(\sum_{p}\frac{\la_pu^p_{pi}}{1+\lambda^2_p}\right)^2
\ \ =\ \ \frac{1}{2n}g^{ij}\phi_i\phi_j,\nonumber
\end{eqnarray}
where we have used the Cauchy inequality in the second inequality.
By Lemma \ref{sta}, $\phi$ is a constant. Then the above inequality
implies $ u_{pi}^{\alpha}=0$. Then $u^{\alpha}$ are linear
functions.

\noindent (ii) By \eqref{3.8}, we have
\begin{eqnarray}
L_g(e^{\phi/2})&=&e^{\phi/2}\frac{1}{2}L_g(\phi)+e^{\phi/2}\frac{1}{4}g^{ij}\phi_i\phi_j\nonumber\\
&=&-e^{\phi/2}\sum_{i,p,q}\frac{\la_q^2(u^{q}_{pi})^2}{(1+\lambda^2_i)(1+\lambda^2_p)(1+\lambda^2_q)}
+e^{\phi/2}\sum_{\alpha,i,p}\frac{(u^{\alpha}_{pi})^2}{(1+\lambda^2_i)(1+\lambda^2_p)}\nonumber\\
&&+e^{\phi/2}\sum_{i,p,q}\frac{\la_p\la_qu^p_{qi}u^q_{pi}}{(1+\lambda^2_i)(1+\lambda^2_p)(1+\lambda^2_q)}+
e^{\phi/2}\sum_{i,p,q}\frac{\la_p\la_qu^p_{pi}u^q_{qi}}{(1+\lambda^2_i)(1+\lambda^2_p)(1+\lambda^2_q)}\nonumber\\
&=&e^{\phi/2}\sum_{i,p,q}\frac{(u^{q}_{pi})^2}{(1+\lambda^2_i)(1+\lambda^2_p)(1+\lambda^2_q)}
+e^{\phi/2}\sum_{i,p}\frac{1}{(1+\lambda^2_i)(1+\lambda^2_p)}\sum_{\alpha}(u^{\alpha}_{pi})^2\nonumber\\
&&+2e^{\phi/2}\sum_{i,p}\frac{\la_p^2(u^p_{pi})^2}{(1+\lambda^2_i)(1+\lambda^2_p)^2}+
e^{\phi/2}\sum_{i,p\neq q}\frac{\la_p\la_q(u^p_{pi}u^q_{qi}+u^p_{qi}u^q_{pi})}{(1+\lambda^2_i)(1+\lambda^2_p)(1+\lambda^2_q)}.\nonumber
\end{eqnarray}
Then the right hand side of the above equation is same to the right hand side of
the formula (3.7) in the proposition 3.1 of \cite{JXY}. Hence, we
can use their Theorem 3.1. It says
$$L_g(e^{\phi/2})\ \ \geqq \ \ K_0|B|^2\ \ \geqq\ \ \varepsilon g^{ij}(e^{\phi/2})_i(e^{\phi/2})_j.$$
Here, $B$ is the second fundamental form, and we used
$\det(g_{ij})<9$ in the above inequality. Now by Lemma \ref{sta}, we
obtain that $\phi$ is a constant. And the above inequality tells us
$B=0$, which implies $M$ is a linear subspace.

\noindent (iii) By the condition and \eqref{3.8}, we
have
\begin{eqnarray}
L_g(\phi)&=&-2\sum_{i,p,q}\frac{\la_q^2(u^{q}_{pi})^2}{(1+\lambda^2_i)(1+\lambda^2_p)(1+\lambda^2_q)}
+2\sum_{\alpha,i,p}\frac{(u^{\alpha}_{pi})^2}{(1+\lambda^2_i)(1+\lambda^2_p)}\nonumber\\
&&+2\sum_{i,p,q}\frac{\la_p\la_qu^p_{pi}u^q_{qi}}{(1+\lambda^2_i)(1+\lambda^2_p)(1+\lambda^2_q)}\nonumber\\
&\geqq&\sum_i\frac{2}{(1+\lambda^2_i)}\left(\sum_{p}\frac{\la_pu^p_{pi}}{1+\lambda^2_p}\right)^2
\ \ =\ \ \frac{1}{2}g^{ij}\phi_i\phi_j.\nonumber
\end{eqnarray}
By Lemma \ref{sta}, $\phi$ is a constant. And the above inequality
tells us that $u^{\alpha}$ are  linear functions.
\end{proof}
\begin{cor} If the normal bundle of the self-shrinking system \eqref{meq} is flat or the
codimension of the self-shrinking system \eqref{meq} is 1, then the
graph $M$ is a linear subspace.
\end{cor}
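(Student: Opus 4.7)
The plan is to apply Theorem \ref{10}(iii) in both scenarios, so the entire task reduces to verifying the identity
$$u^{\alpha}_{qi}g^{ij}u^{\beta}_{jp}=u^{\beta}_{qi}g^{ij}u^{\alpha}_{jp}$$
in each case.

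The codimension $1$ case is essentially tautological: with $m=1$, only the single index $\alpha=\beta=1$ is available, so the two sides of the identity are literally the same expression, and Theorem \ref{10}(iii) applies.

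For the flat normal bundle case, my approach is to invoke the Ricci equation. The curvature of the normal connection $\overline{\nabla}^N$ satisfies $\langle R^{\perp}(X,Y)\xi,\eta\rangle=\langle [A_\xi,A_\eta]X,Y\rangle$, where $A_\xi$ denotes the shape operator in the direction of the normal vector $\xi$. Hence $R^{\perp}\equiv 0$ is equivalent to the pairwise commutation of $\{A_{n_\alpha}\}_{\alpha=1}^m$ at every point of $M$ (the $n_\alpha$'s form a pointwise basis of the normal bundle even though they are not orthonormal, so commutation on this generating set suffices). Using the Section~2 computation $\langle \overline{\nabla}_{e_i}e_j, n_\alpha\rangle=(1+|Du^\alpha|^2)^{-1/2}u^\alpha_{ij}$, one reads off that the shape operator has matrix representation
$$A_{n_\alpha}=\frac{1}{(1+|Du^\alpha|^2)^{1/2}}\,g^{-1}H^\alpha,$$
where $H^\alpha$ is the Hessian matrix $(u^\alpha_{ij})$. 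The positive scalar prefactors cancel in the commutator equation $A_{n_\alpha}A_{n_\beta}=A_{n_\beta}A_{n_\alpha}$, and multiplying on the left by $g$ rearranges the result to $H^\alpha g^{-1}H^\beta=H^\beta g^{-1}H^\alpha$, which is precisely condition (iii) of Theorem \ref{10}.

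The only delicate step — and the one I expect to require the most care — is the matrix identification of $A_{n_\alpha}$ with $(1+|Du^\alpha|^2)^{-1/2}g^{-1}H^\alpha$, together with the observation that the $n_\alpha$ generate the normal space pointwise so that commutation on this family is equivalent to the full flatness condition $R^\perp\equiv 0$. Once this bookkeeping is dispatched, Theorem \ref{10}(iii) delivers the conclusion that $M$ is a linear subspace in both of the stated situations.
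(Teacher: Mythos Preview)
Your proposal is correct and follows essentially the same route as the paper: both arguments reduce each hypothesis to condition (iii) of Theorem~\ref{10} by expressing the second fundamental form as $h_{ij}^\alpha=(1+|Du^\alpha|^2)^{-1/2}u^\alpha_{ij}$ and reading off that flat normal bundle (commuting shape operators via the Ricci equation) is exactly $H^\alpha g^{-1}H^\beta=H^\beta g^{-1}H^\alpha$. The only cosmetic difference is that the paper first passes to the pointwise diagonalizing frame \eqref{diag} so that the $n_\alpha$ become orthonormal, whereas you work directly with the non-orthonormal $n_\alpha$ and note they span $NM$; either way the scalar factors $(1+|Du^\alpha|^2)^{-1/2}$ cancel and condition (iii) follows.
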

\begin{proof}
We diagonal $u^\alpha_i$ as (\ref{diag}), then $\{e_i\}$ and
$\{n_\alpha\}$ are orthonormal. Let
$h_{ij}^\alpha=\langle\overline{\nabla}_{e_i}e_j,n_\alpha\rangle$.
Then  normal bundle flat means
$$h^{\alpha}_{qi}g^{ij}h^{\beta}_{jp}=h^{\beta}_{qi}g^{ij}h^{\alpha}_{jp},\ \ \ \mathrm{for}\ \mathrm{any}\ \alpha,\beta\in\{1,\cdots,m\}.$$
Since
\begin{eqnarray}
h_{ij}^\alpha=\langle\overline{\nabla}_{e_i}e_j, n_{\alpha}\rangle
=\langle u^\beta_{ij}E_{n+\beta}, n_{\alpha}\rangle
=\dfrac{u^{\alpha}_{ij}}{(1+|D u^{\alpha}|^2)^{1/2}},\nonumber
\end{eqnarray}
the condition (iii) in Theorem \ref{10} holds, so the graph is a
linear subspace. If the codimension is 1, then the condition (iii)
in Theorem \ref{10} obviously holds. Therefore we obtain the result.
\end{proof}
\begin{remark}
 The second result of above Corollary is firstly proved
by L. Wang in \cite{EH}.
\end{remark}
Now we study the rotational symmetric manifolds corresponding to the
origin. This means
$$u^{\alpha}(x_1,\cdots, x_n)= u^{\alpha}(r),$$
where $r=\sqrt{x_1^2+\cdots+x_n^2}$. Directly calculation shows
$$u^{\alpha}_i=u^{\alpha}_r\frac{x_i}{r},\ \ \text{and}\ \
u^{\alpha}_{ij}=u^{\alpha}_{rr}\frac{x_ix_j}{r^2}+u^{\alpha}_r(\frac{\delta_{ij}}{r}-\frac{x_ix_j}{r^3}).$$
So the metric and the inverse metric become
$$g_{ij}=\delta_{ij}+|u_r|^2\frac{x_ix_j}{r^2},\
\ \text{and} \ \
g^{ij}=\delta_{ij}-\frac{|u_r|^2}{1+|u_r|^2}\frac{x_ix_j}{r^2},$$
where $|u_r|^2=\sum_{\alpha}(u^{\alpha}_r)^2$. Then, we have
\begin{eqnarray}
g^{ij}u^{\alpha}_{ij}
&=&\frac{u^{\alpha}_{rr}}{1+|u_r|^2}u^{\alpha}_{rr}+u^{\alpha}_r\frac{n-1}{r}\nonumber.
\end{eqnarray}
So the system \eqref{meq} becomes
\begin{eqnarray}
\frac{u^{\alpha}_{rr}}{1+|u_r|^2}+u^{\alpha}_r\frac{n-1}{r}&=&-u^{\alpha}+ru^{\alpha}_r.
\end{eqnarray}
\begin{prop}
Suppose a smooth solution of the self-shrinking system \eqref{meq}
is a rotation symmetry manifold. Then, it should be $\mathbb{R}^n$
plane.
\end{prop}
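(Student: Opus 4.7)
The plan is to reduce the problem to a direct application of Theorem~\ref{10}(i). In the rotationally symmetric setting $u^\alpha(x)=u^\alpha(r)$, the chain rule gives $u^\alpha_i(x)=u^\alpha_r(r)\tfrac{x_i}{r}$, so the Jacobian $du=(u^\alpha_i)$ is the rank one outer product $u_r\otimes(x/r)$, where $u_r=(u^1_r,\ldots,u^m_r)$ is the vector of radial derivatives. Hence at every point the matrix $du$ has rank at most one, and after diagonalizing as in \eqref{diag} we obtain $\lambda_1=|u_r|$ and $\lambda_i=0$ for $i\geq 2$. In particular $\lambda_i\lambda_j=0\leq 1$ for every $i\neq j$, so the hypothesis of Theorem~\ref{10}(i) is automatically satisfied.

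Applying Theorem~\ref{10}(i) then forces $M$ to be a linear subspace, so each $u^\alpha$ is an affine function $u^\alpha(x)=a^\alpha+b^\alpha\cdot x$. Such a function depends only on $r=|x|$ if and only if $b^\alpha=0$, and hence $u^\alpha\equiv a^\alpha$ is constant. Substituting the constant solution into the self-shrinking system \eqref{meq} gives $0=-a^\alpha+0$ for every $\alpha$, so $a^\alpha=0$. Therefore $u\equiv 0$ and $M$ is the $\mathbb{R}^n$ plane.

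The only substantive point is recognizing the rank-one structure of $du$, which is an immediate computation, so there is no serious obstacle once Theorem~\ref{10} is in hand. If one prefers a self-contained ODE argument avoiding Theorem~\ref{10}, the natural route is to multiply the reduced ODE by $u^\alpha_r$, sum in $\alpha$, and rewrite the result as the monotonicity identity
\[
\frac{d}{dr}\!\left[\tfrac12\ln(1+|u_r|^2)+\tfrac12|u|^2\right]=|u_r|^2\!\left(r-\tfrac{n-1}{r}\right),
\]
and then combine smoothness at the origin (which forces $u_r(0)=0$) with the growth this identity forces at infinity to push $u_r\equiv 0$. That alternative is workable but considerably more delicate than the two-line reduction via Theorem~\ref{10}, so I would present the Theorem~\ref{10} version.
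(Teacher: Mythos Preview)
Your proof is correct and takes a genuinely different route from the paper's. The key observation you make---that for $u^\alpha(x)=u^\alpha(r)$ the Jacobian $du=u_r\otimes(x/r)$ has rank at most one, so at every point all singular values but one vanish and hence $\lambda_i\lambda_j=0\le 1$ for $i\ne j$---is exactly right, and it places the rotationally symmetric case squarely inside the hypothesis of Theorem~\ref{10}(i). Once $M$ is a linear subspace, rotational symmetry kills the linear part and the equation \eqref{meq} kills the constant, as you say.

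The paper instead works directly with the reduced radial ODE: it sets $\phi=\ln(1+|u_r|^2)$, derives the differential inequality
\[
\frac{\phi_{rr}}{1+|u_r|^2}\ge\Big(r-\tfrac{n-1}{r}\Big)\phi_r+\tfrac12\,\frac{\phi_r^2}{1+|u_r|^2},
\]
and then runs an ad hoc one-variable version of the barrier argument in Lemma~\ref{sta} (with cutoff $\eta(r)=1-k\big(r^2-2(n-1)\ln r-\text{const}\big)$) to force $\phi$ constant. In effect the paper re-derives the SSH estimate of Theorem~\ref{10} in the radial setting rather than invoking the theorem. Your approach is considerably shorter and more conceptual once Theorem~\ref{10} is available; the paper's argument has the virtue of being self-contained at the ODE level and making the role of the initial condition $u_r(0)=0$ (and the radial operator with its $\tfrac{n-1}{r}$ term) explicit. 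The monotonicity identity you sketch at the end is correct but, as you note, not the path of least resistance here.
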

\begin{proof}
We let
$$\phi=\ln (1+|u_r|^2). $$ For $r\in
(0,+\infty)$, using a similar computation of
\eqref{3.6}-\eqref{3.8}, we have
\begin{eqnarray}
\frac{\phi_{rr}}{1+|u_r|^2}&=&(r-\frac{n-1}{r})\phi_r
+2\frac{n-1}{r^2}\frac{|u_r|^2}{1+|u_r|^2}+2\frac{\sum_{\alpha}(u^{\alpha}_{rr})^2}{1+|u_r|^2}\nonumber\\
&\geqq&(r-\frac{n-1}{r})\phi_r+\frac{1}{2}\frac{\phi_r^2}{1+|u_r|^2}.\nonumber
\end{eqnarray}
Let us assume that in $[0,+\infty)$, $\phi$ is not a constant. It
implies that there is some $R_0>\sqrt{2n}$, such that in $[0, R_0]$,
$\phi$ is not a constant. By the strong maximum principle, it means
that the maximum point of $\phi$ only achieves at $r=0$ or $r=R_0$.
For the rotation symmetry manifolds, $|D u| (0)=0$. In $[0,R_0]$,
suppose $\phi$ achieves its maximum value at $r=0$. Since $\phi\geqq
0$, then $\phi =0$ in $[0,R_0]$, which is a contradiction. Hence,
$\phi$ achieves its maximum point at $r=R_0$. For $0<k<1$, let
\begin{eqnarray*}\label{e4.1}
\eta(r)=\left\{\begin{matrix}1& |r|\leqq R_0\\
-k(r^2-2(n-1)\ln r -R_0^2+(n-1)\ln R_0^2)+1 & |r|\geqq
R_0\end{matrix}\right..
\end{eqnarray*}
By the same argument using in Lemma \ref{sta}, for sufficiently
small $k$ and sufficiently large $R_0$, $\eta e^{C\phi}$ only
achieves its maximum value in $r>R_0$ and $\eta>0$. Now at the
maximum point,
\begin{eqnarray}
e^{-C\phi}\frac{(\eta
e^{C\phi})_{rr}}{1+|u_r|^2}&=&\frac{\eta_{rr}}{1+|u_r|^2}+\frac{2C\eta_r
\phi_{r}}{1+|u_r|^2}+\frac{C\eta \phi_{rr}}{1+|u_r|^2}+\frac{C^2\eta
\phi^2_{r}}{1+|u_r|^2}\nonumber\\
&\geqq&\frac{-2k(1+\dfrac{n-1}{r^2})}{1+|u_r|^2}+C\eta(r-\frac{n-1}{r})\phi_r+C\eta(\frac{1}{2}-C)\frac{
\phi^2_{r}}{1+|u_r|^2}.\nonumber
\end{eqnarray}
Here, we used $\eta_r+C\eta\phi_r=0.$ Now we choose $C<1/2$, then
$$e^{-C\phi}\frac{(\eta
e^{C\phi})_{rr}}{1+|u_r|^2}\geqq
\frac{-2k(1+\dfrac{n-1}{r^2})}{1+|u_r|^2}+2k(r-\frac{n-1}{r})^2>0,$$
which is a contradiction. Then $\phi$ is a constant which implies
$u^{\alpha}=0$.
\end{proof}

\section{A sharp growth estimate in Euclidean space}
This section is composed by two parts. In the first part, we give a
sharp growth estimate for the system \eqref{meq} in Euclidean space.
In the second part, we generalize the previous result. In fact, we
will prove a slightly weak result for a class of linear elliptic
systems.

Let $T$ be an arbitrary fixed positive constant. For $(x,t)\in
\R^n\times[0,T)$, we let (see \cite{Lwang} for codimension 1 case)
\begin{eqnarray}\label{4.16}
w^\alpha(x,t)=\sqrt{T-t}\ u^\alpha(\frac{x}{\sqrt{T-t}})\quad
\text{for}\ \alpha\in\{1,\cdots,m\}.
\end{eqnarray} In $\R^n\times [0,T)$,
let
\begin{eqnarray}
\bar{g}_{ij}(x,t)&=&\delta_{ij}+\sum_{\alpha}\frac{\p w^{\alpha}}{\p x_i}\frac{\p w^{\alpha}}{\p x_j}\\
&=& \delta_{ij}+\sum_{\alpha}\frac{\p u^{\alpha}}{\p
x_i}(\frac{x}{\sqrt{T-t}})\frac{\p u^{\alpha}}{\p
x_j}(\frac{x}{\sqrt{T-t}})\ \ =\ \ g_{ij}(\f{x}{\sqrt{T-t}})\nonumber.
\end{eqnarray}
A direct calculation shows,
\begin{eqnarray}
\frac{\partial w^\alpha}{\partial
t}&=&-\frac{1}{2\sqrt{T-t}}u^{\alpha}(\frac{x}{\sqrt{T-t}})+x_i\frac{\p
u^\alpha}{\p x_i}(\frac{x}{\sqrt{T-t}})
\frac{1}{2(T-t)}\nonumber\\
&=&\frac{1}{2\sqrt{T-t}}\left(-u^{\alpha}(\frac{x}{\sqrt{T-t}})+\frac{x}{\sqrt{T-t}}\cdot Du^{\alpha}(\frac{x}{\sqrt{T-t}})\right)\nonumber\\
&=&\frac{1}{2\sqrt{T-t}}g^{ij}(\frac{x}{\sqrt{T-t}})u^\alpha_{ij}(\frac{x}{\sqrt{T-t}}),\nonumber
\end{eqnarray}
where we used the system \eqref{meq} in the last equality. Define a
heat operator,
$$\mathcal{L}_g=\frac{\partial }{\partial t}-\frac{1}{2}\sum_{i,j}\bar{g}^{ij}\frac{\partial^2}{\partial x_i\partial x_j},$$
where $(\bar{g}^{ij})$ is the inverse matrix of $(\bar{g}_{ij})$.
Note that
$$w^{\alpha}_{ij}\ \ =\ \ \frac{1}{\sqrt{T-t}}u^\alpha_{ij}(\frac{x}{\sqrt{T-t}}).$$
Then we obtain
\begin{eqnarray}\label{4.17}
 \mathcal{L}_g(w^{\alpha})=0.
\end{eqnarray}
In $\R^n\times [0,T)$, we let
$$\eta(x,t)=1-|x|^2-3nt,\ \ \text{ and } \ \ |w|^2=\sum_{\alpha}(w^\alpha)^2.$$
Using \eqref{4.17}, we have
\begin{equation}\label{5.1}
\mathcal{L}_g|w|^2\ \ =\ \ 2\sum_\alpha
w^\alpha\mathcal{L}_gw^\alpha-\sum_{\alpha,i,j}\bar{g}^{ij}w_i^\alpha
w_j^\alpha\ \ =\ \ -\sum_{\alpha,i,j} \bar{g}^{ij}w_i^\alpha w_j^\alpha,
\end{equation}
and
\begin{eqnarray}\label{5.2}
\mathcal{L}_g\eta &=&-3n+\sum_i \bar{g}^{ii}.
\end{eqnarray}

Now, we are in the position to give the proof of Theorem \ref{thm1}.
\vspace{1mm}

\noindent {\it Proof of Theorem \ref{thm1}.} For any fixed $\rho
>0$, in \eqref{4.16}, we take
$$T=\frac{1}{12n}+\frac{1}{\rho^2}.$$ In $\R^n\times [0,T)$, we define
a function $$\phi = \eta |w|^2. $$ Assume that  $\phi$ achieves its
maximum value at some point $p=(x_0,t_0)$ in the set $\R^n\times [0,
\dfrac{1}{12n}]$, where $\eta(x_0,t_0)>0$. By the definition of
$\eta(x,t)$, we have
\begin{eqnarray}\label{5.7}
\frac{1}{2\rho^2}\sup_{|x|\leqq\rho/2}|u(x)|^2&=&\frac{1}{2}\sup_{|x|\leqq 1/2}\frac{\sum_\alpha|u^\alpha(\rho x)|^2}{\rho^2}\\
&\leqq& \sup_{|x|\leqq 1/2}\left(\sum_\alpha|\frac{u^\alpha(\rho
x)}{\rho}|^2\eta(x,\frac{1}{12n})\right)\nonumber\\
&\leqq&\sup_{x\in \mathbb{R}^n}\phi|_{t=\frac{1}{12n}}.\nonumber
\end{eqnarray}
If $\phi(p)\leqq 1$, then
\begin{equation}\aligned\label{5.6}
\sup_x\phi|_{t=\frac{1}{12n}}\ \ \leqq\ \ \phi(p)\ \ \leqq \ \ 1.
\endaligned
\end{equation}
Combining (\ref{5.7}), (\ref{5.6}) and using the arbitrary choice of
$\rho$, we get the theorem.

Since $\phi(p)\geqq 1$ and $\eta\leqq 1$, we get $|w(p)|\geqq1$. If
$t_0>0$, then we have
\begin{equation}\aligned\label{5.3}
2\sum_\alpha w^\alpha\eta D w^\alpha+|w|^2 D \eta=0
\endaligned
\end{equation}
at the point $p$, where $D$ only takes derivatives to space
directions as before. Thus we have
\begin{equation}\aligned\label{5.4}
0\leqq\ &\mathcal{L}_g(\phi)\\
=&\ \eta\mathcal{L}_g|w|^2+|w|^2\mathcal{L}_g\eta-2\sum_{\alpha,i,j}\bar{g}^{ij}w^\alpha w^\alpha_i\eta_j\\
=&\ -\eta \sum_{\alpha,i,j}\bar{g}^{ij}w_i^\alpha w_j^\alpha+|w|^2(-3n+\sum_{i}\bar{g}^{ii})+2\sum_{\alpha,i,j}\bar{g}^{ij}w^\alpha w^\alpha_i(\sum_{\beta}\frac{1}{|w|^2}2w^\beta\eta w^\beta_j)\\
\leqq&\ -\eta \sum_{\alpha,i,j}\bar{g}^{ij}w_i^\alpha w_j^\alpha+|w|^2(-3n+\sum_{i}\bar{g}^{ii})+4\eta \sum_{\alpha,i,j}\bar{g}^{ij}w_i^\alpha w_j^\alpha\\
\leqq&\ 3\eta(n-\sum_{i}\bar{g}^{ii})+(-3n+\sum_i\bar{g}^{ii})\\
=&\ -2\sum_i\bar{g}^{ii}+3(\eta-1)(n-\sum_i\bar{g}^{ii}).
\endaligned
\end{equation}
Here we have used \eqref{5.1}, \eqref{5.2} and \eqref{5.3} in the
third step and the Cauchy inequality in the forth step. In fact,
there exists an orthonormal matrix $(P_{ij})_{n\times n}$, such that
$\bar{g}_{ij}=P_{ik}\theta_k P_{jk}$, where $\theta_k>0$ at the
point $p$. Then
\begin{eqnarray}
\sum_{\alpha,\beta,i,j}\bar{g}^{ij}w^{\alpha}w^{\alpha}_iw^{\beta}w^{\beta}_j
&=&
\sum_{\alpha,\beta,i,j,k}\theta_k w^{\alpha}w^{\alpha}_iP_{ik}w^{\beta}w^{\beta}_jP_{jk}
\ \ = \ \
\sum_{k}\theta_k (\sum_{\alpha,i}w^{\alpha}w^{\alpha}_iP_{ik})^2\nonumber\\
&\leqq&
\sum_{k}\theta_k\left(\sum_\alpha(w^\alpha)^2\sum_\alpha(\sum_{i}w^{\alpha}_iP_{ik})^2\right)
\ \ = \ \
|w|^2\sum_{\alpha,i,j}\bar{g}^{ij}w^{\alpha}_iw^{\alpha}_j.\nonumber
\end{eqnarray}
Since $0<\bar{g}^{ii}\le1$ and $\eta\le1$ by the
definition of $\eta$, from (\ref{5.4}), we obtain $\eta(p)\geq1$
which implies $t_0=0$. We have a contradiction. Anyway, $\phi$
achieves its maximum value at $t=0$, namely,
\begin{equation}\aligned\label{5.8}
\sup_{x\in \R^n}\phi|_{t=\frac{1}{12n}}\ \ \leqq\ \ \sup_{x\in
\R^n}\phi|_{t=0}.
\endaligned
\end{equation}
Combining (\ref{5.7}) and (\ref{5.8}), we get
\begin{eqnarray}\label{5.5}
\frac{1}{2\rho^2}\sup_{|x|\leqq\rho/2}|u(x)|^2 & \leqq  &
\sup_{x\in \mathbb{\R}^n}\phi|_{t=\frac{1}{12n}}\nonumber\\
&\leqq&\sup_{x\in \mathbb{R}^n}\left((1-|x|^2)
T|u(\frac{x}{\sqrt{T}})|^2\right)\nonumber.
\end{eqnarray}
Then we obtain
\begin{eqnarray}
\sup_{|x|\leq\rho/2}|u(x)|^2&\leqq& 2\rho^2\sup_{|x|\leqq 1}
T|u(\frac{x}{\sqrt{T}})|^2\nonumber\\
&\leqq&(\frac{\rho^2}{6n}+2)
\sup_{|x|\leq2\sqrt{3n}}|u(x)|^2.\nonumber
\end{eqnarray}
Using the arbitrary choice of $\rho$, we obtain the theorem. \qed
\begin{remark}
Assume that $u^{\alpha}$ is a solution of the system \eqref{gel}
satisfying  \eqref{gelcond} and \eqref{gelcond2} for $t=1$. Let
$\bar{a}^{ij}(x,t)=a^{ij}(\dfrac{x}{\sqrt{T-t}})$,
$w^\alpha(x,t)=\sqrt{T-t}\ u^\alpha(\dfrac{x}{\sqrt{T-t}})$ and
operator $\mathcal{L}_a=\dfrac{\partial }{\partial
t}-\dfrac{1}{2}\sum_{i,j}\bar{a}^{ij}\dfrac{\partial^2}{\partial
x_i\partial x_j}$ for any fixed $T>0$. Then
$\mathcal{L}_aw^\alpha=0$. Let $\eta=1-|x|^2-3\sigma t$. Using
almost the same proof of the above theorem, we know that $|u|$ is
also linear growth.
\end{remark}

\begin{cor}
The mean curvature vector is also linear growth. Namely, there
exists a positive constant $C$ depending only on $n$ and
$\sup_{|x|\leq2\sqrt{3n}}|u(x)|$, such that
\begin{eqnarray}
|H(x)|&\leqq&C(1+|x|).
\end{eqnarray}
\end{cor}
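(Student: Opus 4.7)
The plan is to deduce the mean curvature bound directly from the self-shrinker identity $(\ref{1.2})$ combined with the linear growth estimate $(\ref{1.3})$ already established in Theorem~\ref{thm1}. The key observation is that the self-shrinker equation expresses $H$ as a component of the position vector $X$, so controlling $|X|$ is enough to control $|H|$.

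First I would recall that $X = (x,u(x))$ lies in $\R^{n+m}$, so
\[
|X(x)|^2 \ = \ |x|^2 + |u(x)|^2.
\]
By $(\ref{1.2})$, $H = -X^N$, where $X^N$ denotes the projection of $X$ onto the normal bundle $NM$. Since orthogonal projection is non-expanding,
\[
|H(x)| \ = \ |X^N(x)| \ \leq \ |X(x)| \ = \ \sqrt{|x|^2 + |u(x)|^2}.
\]

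Next I would apply Theorem~\ref{thm1}. Setting $M_0 = \sup_{|y|\leq 2\sqrt{3n}}|u(y)|^2 + 12n$, the estimate $(\ref{1.3})$ gives
\[
|u(x)|^2 \ \leq \ \left(\tfrac{2|x|^2}{3n}+1\right) M_0,
\]
hence
\[
|X(x)|^2 \ \leq \ |x|^2 + \left(\tfrac{2|x|^2}{3n}+1\right)M_0 \ = \ \left(1 + \tfrac{2M_0}{3n}\right)|x|^2 + M_0.
\]
Combining the last two displays produces a constant $C = C(n,\sup_{|y|\leq 2\sqrt{3n}}|u(y)|)$ such that $|H(x)| \leq C(1+|x|)$, which is the claim.

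There is no real obstacle here; the content of the corollary is essentially packaged in Theorem~\ref{thm1} once one observes that the self-shrinker equation makes $H$ a sub-component of $X$. The only thing to be careful about is the dependence of the constant: because the bound for $|u|^2$ from $(\ref{1.3})$ depends only on $n$ and the sup of $|u|$ on the ball of radius $2\sqrt{3n}$, the same is true of the final constant $C$, matching the stated dependence.
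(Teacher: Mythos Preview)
Your proof is correct. The argument differs slightly from the paper's: the paper works component-by-component via the explicit formula \eqref{2.2}, namely $H^{\alpha}=(-u^{\alpha}+x\cdot Du^{\alpha})/\sqrt{1+|Du^{\alpha}|^2}$, and uses the Cauchy inequality on the gradient term to obtain $|H^{\alpha}|\leqq |u^{\alpha}|+|x|$, then invokes Theorem~\ref{thm1}. You instead use the self-shrinker equation \eqref{1.2} in its invariant form $H=-X^N$ together with the fact that orthogonal projection onto $NM$ is norm-nonincreasing, giving $|H|\leqq|X|=\sqrt{|x|^2+|u|^2}$ in one step before applying Theorem~\ref{thm1}. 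Your route is a bit cleaner in that it bounds the vector $H$ directly and avoids any discussion of the (generally non-orthonormal) frame $\{n_{\alpha}\}$; the paper's route has the advantage of making explicit how the gradient $Du^{\alpha}$ drops out. Either way the content is the same: once $(\ref{1.3})$ is in hand, the corollary is immediate.
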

\begin{proof}
By formula \eqref{2.2}, we have
$$H^{\alpha}=\frac{-u^{\alpha}+x\cdot D u^{\alpha}}{\sqrt{1+|D
u^{\alpha}|^2}}.$$ Then using Theorem \ref{thm1} and the Cauchy
inequality, we obtain
\begin{eqnarray}
|H^{\alpha}(x)|&\leqq &|u^{\alpha}(x)|+|x|\ \ \leqq \ \
C(1+|x|),\nonumber
\end{eqnarray}
where $C$ is a positive constant depending only on $n$ and
$\sup_{|x|\leq2\sqrt{3n}}|u(x)|$. It implies the estimate.
\end{proof}

Now let's generalize Theorem \ref{thm1} to the system \eqref{gel}
satisfying \eqref{gelcond} and \eqref{gelcond2}. Firstly, we give
some preliminary results. For $s> 0$, we denote
\begin{eqnarray}
g(s)=\frac{1}{s+1}(\frac{2s}{s+1})^s.
\end{eqnarray}
Calculating its derivative, we have
$$\frac{g'(s)}{g(s)}=\ln \frac{2s}{s+1}>0,\ \  \text{for} \ \ s>1.$$ And $g(1)=1/2$,
$g(+\infty)=+\infty$. So, there is only one $s_0$ satisfying
$$g(s_0)=1.$$ In fact, explicit calculation shows that
$3.4<s_0<3.5$.

\begin{lemma}\label{8}
For any $s\geqq s_0$, there is some $1<\zeta<2$ satisfying
\begin{eqnarray}\label{4.02}
\frac{2}{2-\zeta}&\leqq &\zeta^s.
\end{eqnarray}
\end{lemma}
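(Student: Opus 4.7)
The plan is to recognize that the desired inequality $\frac{2}{2-\zeta}\leq \zeta^s$ is equivalent (since $1<\zeta<2$ makes $2-\zeta$ positive) to $(2-\zeta)\zeta^s\geq 2$, and then to exhibit a specific $\zeta$ for which this holds by maximizing the left-hand side in $\zeta$.

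Concretely, I would introduce the auxiliary function $h(\zeta)=(2-\zeta)\zeta^s$ on $(0,2)$ and compute its derivative:
\begin{equation*}
h'(\zeta)=\zeta^{s-1}\bigl(2s-(s+1)\zeta\bigr).
\end{equation*}
For $s>1$ this vanishes at the unique interior critical point $\zeta^{\ast}=\frac{2s}{s+1}$, which lies in $(1,2)$ since $s>1$, and $h$ is increasing on $(0,\zeta^{\ast})$ and decreasing on $(\zeta^{\ast},2)$. So $\zeta^{\ast}$ is the maximizer of $h$ on $(0,2)$.

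Next I would evaluate $h$ at $\zeta^{\ast}$:
\begin{equation*}
h(\zeta^{\ast})=\Bigl(2-\tfrac{2s}{s+1}\Bigr)\Bigl(\tfrac{2s}{s+1}\Bigr)^{s}=\tfrac{2}{s+1}\Bigl(\tfrac{2s}{s+1}\Bigr)^{s}=2g(s),
\end{equation*}
which is precisely $2g(s)$ with $g$ as defined just before the lemma. Since the text established that $g$ is strictly increasing for $s>1$ and $g(s_0)=1$, for any $s\geq s_0$ we have $g(s)\geq 1$, hence $h(\zeta^{\ast})\geq 2$. Taking $\zeta=\zeta^{\ast}=\frac{2s}{s+1}\in(1,2)$ then yields $(2-\zeta)\zeta^{s}\geq 2$, i.e.\ \eqref{4.02}.

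There is essentially no obstacle here: the argument is a one-variable calculus maximization, and the only substantive input is the monotonicity of $g$ past $s=1$ together with the defining property $g(s_0)=1$, both of which are already recorded in the paragraph preceding the lemma. If one wanted a slightly cleaner statement, one could note additionally that for $s>s_0$ strict inequality $h(\zeta^{\ast})>2$ holds, so a small perturbation of $\zeta$ away from $\zeta^{\ast}$ also works, but the explicit choice $\zeta=\frac{2s}{s+1}$ is already sufficient for the lemma as stated.
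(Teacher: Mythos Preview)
Your proof is correct and essentially identical to the paper's: the paper works with $f(\zeta)=\zeta^{s+1}-2\zeta^s+2=2-h(\zeta)$, finds the same critical point $\zeta=\frac{2s}{s+1}$, and evaluates $f$ there as $2(1-g(s))\leq 0$, which is exactly your $h(\zeta^\ast)=2g(s)\geq 2$ rewritten. The only difference is cosmetic---you maximize $h$ while the paper minimizes $f=2-h$.
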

\begin{proof}
Let
$$f(\zeta)=\zeta^{s+1}-2\zeta^s+2.$$ Then the derivative of $f$ is
$$f'(\zeta)=(s+1)\zeta^{s-1}(\zeta-\frac{2s}{s+1}).$$  We see that
$$\zeta=\frac{2s}{s+1}$$ is a local minimum value of $f$. Note that
\begin{eqnarray}
f(\frac{2s}{s+1})=2[1-\frac{1}{s+1}(\frac{2s}{s+1})^s]=2(1-g(s))\nonumber.
\end{eqnarray}
Since $s\geqq s_0$, then $g(s)\geqq 1$. We have $f(\zeta)\leqq 0$,
which implies \eqref{4.02}.
\end{proof}
For $s\geqq s_0$, denote
\begin{eqnarray}\label{4.03}
\theta=\sqrt{\frac{s}{s+1}},\ \ k=\sqrt{2}\theta,\ \
R_0^2=\max\{r_0,\frac{n\sigma+1}{2}\frac{k^2}{k^2-1}\}.
\end{eqnarray}
Here, $r_0$ is a radius taking in \eqref{gelcond2}.  For any fixed
$R>R_0$, let
\begin{eqnarray}\label{4.04}
\bar{R}=R/\theta>R,\ \ |u|=\sum_{\alpha}(u^{\alpha})^2.
\end{eqnarray}
Define two functions
\begin{eqnarray}\label{4.05}
\eta=\bar{R}^2-|x|^2, \ \text{and} \ \ \phi=\eta|u|^2.
\end{eqnarray}
Then $\phi$ achieves its maximum value in the set
$\{x\in\mathbb{R}^n;\eta>0\}$. We assume the maximum point to be
$p$. At $p$,
\begin{eqnarray}\label{4.06}
\phi_i&=&\eta_i|u|^2+2\eta\sum_{\alpha}u^{\alpha}u^{\alpha}_i\ \ =\
\ 0.
\end{eqnarray}
Then
\begin{eqnarray}
a^{ij}\phi_{ij}
&=&a^{ij}\eta_{ij}|u|^2+4\sum_{\alpha}u^{\alpha}a^{ij}\eta_i
u^{\alpha}_j+2\eta
\sum_{\alpha}a^{ij}u^{\alpha}_iu^{\alpha}_j+2\eta\sum_{\alpha}u^{\alpha}a^{ij}u^{\alpha}_{ij}.\nonumber
\end{eqnarray}
Using \eqref{gel}, \eqref{4.05} and \eqref{4.06}, we get
\begin{eqnarray}
a^{ij}\phi_{ij}|u|^2
&=&-2a^{ij}\delta_{ij}|u|^4-8\sum_{\alpha,\beta}\eta
u^{\alpha}u^{\beta}a^{ij}u^{\beta}_iu^{\alpha}_j
+2\eta|u|^2\sum_{\alpha}a^{ij}u^{\alpha}_iu^{\alpha}_j\nonumber\\&&+2\eta|u|^2\sum_{\alpha}
u^{\alpha}(-u^{\alpha}+x\cdot D u^{\alpha})\nonumber.
\end{eqnarray}
Using the Cauchy inequality,
\begin{eqnarray}\label{cie}
2a^{ij}u^{\alpha}u^{\beta}u^{\alpha}_iu^{\beta}_j&\leqq&
a^{ij}u^{\alpha}u^{\alpha}u^{\beta}_iu^{\beta}_j+a^{ij}u^{\alpha}_iu^{\alpha}_ju^{\beta}u^{\beta},\nonumber
\end{eqnarray}
then at $p$, by \eqref{4.06}, we get
\begin{eqnarray}
0&\geqq
&-2\sum_{i}a^{ii}|u|^4-8\eta|u|^2\sum_{\alpha}a^{ij}u^{\alpha}_iu^{\alpha}_j
+2\eta|u|^2\sum_{\alpha}a^{ij}u^{\alpha}_iu^{\alpha}_j\nonumber\\&&+2\eta|u|^2
[-|u|^2+\frac{1}{2}x\cdot D|u|^2]\nonumber\\
&=&|u|^2[-2\sum_{i}a^{ii}|u|^2
-6\eta\sum_{\alpha}a^{ij}u^{\alpha}_iu^{\alpha}_j -2\eta|u|^2-x\cdot
D \eta |u|^2]\nonumber.
\end{eqnarray}
If at $p$, $|u|=0$, then in $B_R$, $u^{\alpha}=0$. Hence, in $B_R$,
it is obviously polynomial growth. So we can assume at $p$, $|u|\neq 0$.
Then
\begin{eqnarray}
6\eta\sum_{\alpha}a^{ij}u^{\alpha}_iu^{\alpha}_j&\geqq
&-2\sum_{i}a^{ii}|u|^2 -2\eta|u|^2-x\cdot D \eta
|u|^2\nonumber\\
&\geqq&(4|x|^2-2n\sigma-2\bar{R}^2)|u|^2\nonumber.
\end{eqnarray}
So, we obtain
\begin{eqnarray}
(4|x|^2-2n\sigma-2\bar{R}^2)\phi&\leqq&
6\eta^2\sum_{\alpha}a^{ij}u^{\alpha}_iu^{\alpha}_j\ \ \leqq \ \
6c\bar{R}^{2\tau+2}\nonumber.
\end{eqnarray}
Hence, we have two cases. The first case is
$$p\in \{x\in \mathbb{R}^n;2|x|^2-n\sigma-\bar{R}^{2}\geqq 1 \}.$$ Then
$$\phi(p)\ \ \leqq \ \ 3c\bar{R}^{2\tau+2}.$$ Since $\phi$ achieves  its maximum
value at $p$,  we have, in $B_R$,
\begin{eqnarray}
3c \bar{R}^{2\tau+2}&\geqq&(\bar{R}^2-|x|^2)|u|^2\ \ \geqq\ \
(\bar{R}^2-R^2)|u|^2\nonumber.
\end{eqnarray}
Using \eqref{4.04},  we obtain
\begin{eqnarray}\label{4.010}
\sup_{B_R} |u|^2&\leqq &\frac{3c
R^{2\tau}}{\theta^{2\tau}(1-\theta^2)}.
\end{eqnarray}
The other case is $$p\in
\{x\in\mathbb{R}^n;2|x|^2-n\sigma-\bar{R}^2\leqq 1 \}.$$ Then, we
have
\begin{eqnarray}\label{4.011}
\sup_{B_R}\phi&\leqq &\sup_{B_{r}}\phi.
\end{eqnarray}
Here
\begin{eqnarray}
r&=&\sqrt{(\bar{R}^2+n\sigma+1)/2}\ \ =\ \
\sqrt{\frac{R^2}{k^2}+\frac{n\sigma+1}{2}}.\nonumber
\end{eqnarray}
By \eqref{4.011}, we have
\begin{eqnarray}
(\frac{R^2}{\theta^2}-R^2)\sup_{B_R} |u|^2&\leqq&
\frac{R^2}{\theta^2}\sup_{B_r} |u|^2\nonumber.
\end{eqnarray}
Using Lemma \ref{8}, we have
\begin{eqnarray}\label{4.013}
\sup_{B_R} |u|^2&\leqq& \frac{2}{2-k^2}\sup_{B_r} |u|^2\ \ \leqq \ \
k^{2s}\sup_{B_r} |u|^2.
\end{eqnarray}
Combining \eqref{4.010} and \eqref{4.013}, we obtain the following
Lemma.
\begin{lemma}\label{9} Let $s\geqq s_0$. For every $R>R_0$, any solution of system \eqref{gel} satisfying \eqref{gelcond} and \eqref{gelcond2} should
satisfy one of the  two inequalities: \eqref{4.010} or
\eqref{4.013}.
\end{lemma}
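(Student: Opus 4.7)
The plan is to introduce the auxiliary function $\phi = \eta|u|^2$ with $\eta = \bar R^2 - |x|^2$ and $\bar R = R/\theta$, analyze it at an interior maximum, and split into two cases determined by where the maximum point sits. Because $\eta$ vanishes on $\partial B_{\bar R}$ and $|u|^2$ is continuous, $\phi$ attains its maximum at some $p \in \{\eta > 0\}$. At $p$ one has the first-order condition $\phi_i = 0$ and the inequality $a^{ij}\phi_{ij} \leq 0$. Expanding $a^{ij}\phi_{ij}$ using the product rule and substituting the equation \eqref{gel} for $a^{ij}u^\alpha_{ij}$ will express everything in terms of $|u|^2$, $\sum_\alpha a^{ij}u^\alpha_i u^\alpha_j$, the cutoff $\eta$, $\eta_i = -2x_i$, and a term involving $x\cdot D|u|^2$; the latter is eliminated by reusing $\phi_i = 0$ to rewrite $\sum_\alpha u^\alpha x\cdot Du^\alpha$ in terms of $|u|^2$ and $|x|^2$.

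Next I would dispose of the degenerate case $|u(p)|=0$ (which forces $u\equiv 0$ on $B_R$, trivially giving either bound) and assume $|u(p)|\neq 0$. Then I use the Cauchy inequality $2u^\alpha u^\beta u^\alpha_i u^\beta_j \leq u^\alpha u^\alpha u^\beta_i u^\beta_j + u^\beta u^\beta u^\alpha_i u^\alpha_j$ to collapse the mixed $\alpha\beta$ sum and arrive at the basic pointwise bound
\begin{equation*}
(4|x|^2 - 2n\sigma - 2\bar R^2)\,\phi(p) \;\leq\; 6\eta(p)^2 \sum_\alpha a^{ij}u^\alpha_i u^\alpha_j.
\end{equation*}

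Now comes the dichotomy. If $p \in \{2|x|^2 - n\sigma - \bar R^2 \geq 1\}$, the left side is at least $\phi(p)$, while \eqref{gelcond2} (which applies because $R>R_0\geq r_0$ forces $|x_0| \geq r_0$ via the case hypothesis) bounds the right side by $6c\bar R^{2\tau+2}$. Combined with $\phi(x) \geq (\bar R^2 - R^2)|u(x)|^2$ on $B_R$ and $\bar R = R/\theta$, this yields \eqref{4.010}. Otherwise $p \in \{2|x|^2 - n\sigma - \bar R^2 \leq 1\}$, i.e.\ $|x_0| \leq r := \sqrt{R^2/k^2 + (n\sigma+1)/2}$. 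Then $\sup_{B_R}\phi \leq \sup_{B_r}\phi$ and using $\eta \geq \bar R^2 - R^2 = R^2(1-\theta^2)/\theta^2$ on $B_R$ while $\eta \leq \bar R^2 = R^2/\theta^2$ on $B_r$ gives $\sup_{B_R}|u|^2 \leq \frac{2}{2-k^2}\sup_{B_r}|u|^2$. Finally the choice $k^2 = 2\theta^2 = 2s/(s+1)$ is exactly tuned so that Lemma~\ref{8} delivers $\tfrac{2}{2-k^2}\leq k^{2s}$, producing \eqref{4.013}.

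The only non-routine step is the case-splitting balance: one must show that in the ``interior'' case the radius $r$ where the maximum must lie is small enough relative to $R$ that iterating (which is the purpose of this lemma in subsequent sections) will converge. This is precisely what the specific value of $k$ accomplishes via Lemma~\ref{8}, and why $R_0$ was chosen so that $r < R$ whenever $R > R_0$ (this is exactly the inequality $R_0^2 \geq \tfrac{n\sigma+1}{2}\cdot \tfrac{k^2}{k^2-1}$ in \eqref{4.03}). Thus the main obstacle is not analytic but combinatorial: verifying that the parameter choices \eqref{4.03} are consistent, which Lemma~\ref{8} handles.
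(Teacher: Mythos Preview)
Your proposal is correct and follows essentially the same approach as the paper: the same auxiliary function $\phi=(\bar R^2-|x|^2)|u|^2$, the same maximum-point computation using $\phi_i=0$ and $a^{ij}\phi_{ij}\leq0$ together with \eqref{gel} and the Cauchy inequality to reach the key bound $(4|x|^2-2n\sigma-2\bar R^2)\phi(p)\leq 6\eta(p)^2\sum_\alpha a^{ij}u^\alpha_i u^\alpha_j$, and the same dichotomy on the location of $p$ finished off with Lemma~\ref{8}. The only cosmetic difference is that the paper presents the entire computation \emph{before} stating the lemma rather than inside its proof.
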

So, we have the following growth estimate.
\begin{thm}\label{thm15}
Assume that $u^{\alpha}$ is a solution of the system \eqref{gel}
satisfying  \eqref{gelcond} and \eqref{gelcond2}.  Then $|u|$ is
polynomial growth. Namely, for $s\geqq s_0$, the solution
$u^{\alpha}$ have the estimate,
\begin{eqnarray}\label{4.014}
u^{\alpha}(x)\leqq C
(1+\sup_{B_{\sqrt{k^2+1}R_0}}|u|)(1+|x|^{\max\{s,\tau\}}),
\end{eqnarray}
where $C$ depending on $\sigma,c,n,s,\tau$ and $r_0$.
\end{thm}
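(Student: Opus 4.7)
The plan is to promote Lemma \ref{9}, which is a dichotomy on a single ball $B_R$, into a geometric iteration. Fix $R > R_0$ and define $R_1 = R$ together with the recursion $R_{j+1} = \sqrt{R_j^2/k^2 + (n\sigma+1)/2}$, so that the radius $r$ produced by Lemma \ref{9} at scale $R_j$ is exactly $R_{j+1}$. Setting $R_*^2 = \tfrac{n\sigma+1}{2}\cdot\tfrac{k^2}{k^2-1}$, which is $\leq R_0^2$ by \eqref{4.03}, the affine recursion yields
\[
R_j^2 - R_*^2 \;=\; k^{-2(j-1)}(R^2 - R_*^2),
\]
so $R_j$ contracts geometrically toward $R_*$ with ratio $1/k$. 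In particular there is a smallest index $N$ with $R_N \leq \sqrt{k^2+1}\,R_0$, and $N-1 \leq \log_k(R/R_0) + C_1$ for an explicit $C_1 = C_1(n,\sigma,s)$.

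Next I apply Lemma \ref{9} at the scales $R_1, R_2, \dots$ and chain the resulting inequalities. Since $R_j > \sqrt{k^2+1}\,R_0 > R_0$ for every $j < N$, the lemma applies at each such scale. If case \eqref{4.013} occurs at every $j = 1,\dots,N-1$, then
\[
\sup_{B_R}|u|^2 \;\leq\; k^{2s(N-1)}\sup_{B_{R_N}}|u|^2 \;\leq\; k^{2s(N-1)}\sup_{B_{\sqrt{k^2+1}R_0}}|u|^2,
\]
and the estimate $k^{2s(N-1)} \leq k^{2sC_1}(R/R_0)^{2s}$ yields the $|x|^{2s}$ part of \eqref{4.014}. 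Otherwise let $j^* \leq N-1$ be the first step at which \eqref{4.010} occurs; chaining the previous $j^*-1$ steps of type \eqref{4.013} with this terminal estimate gives
\[
\sup_{B_R}|u|^2 \;\leq\; \frac{3c}{\theta^{2\tau}(1-\theta^2)}\,k^{2s(j^*-1)} R_{j^*}^{2\tau}.
\]

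The remaining bookkeeping, which I expect to be the main technical obstacle, is to show that this product $k^{2s(j^*-1)}R_{j^*}^{2\tau}$ is always controlled by $C\max\{R^{2s},R^{2\tau}\}$ for a constant $C = C(\sigma,c,n,s,\tau,r_0)$. Using the explicit formula $R_{j^*}^2 = R_*^2 + k^{-2(j^*-1)}(R^2 - R_*^2)$, the product interpolates between the regime of small $j^*$ (where $R_{j^*} \sim R$ and the expression is of order $R^{2\tau}$) and the regime of $j^*$ near $N$ (where $R_{j^*} \sim R_*$ and the expression is of order $(R/R_0)^{2s}$). Optimizing the trade-off between the geometric decay of $R_{j^*}$ and the growth factor $k^{2s(j^*-1)}$ over $1 \leq j^* \leq N-1$ reconciles the two exponents into the single $\max\{s,\tau\}$ of the theorem. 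Combined with the trivial case $R \leq R_0$, where $\sup_{B_R}|u| \leq \sup_{B_{\sqrt{k^2+1}R_0}}|u|$, and the pointwise inequality $|u^\alpha| \leq |u|$, this establishes \eqref{4.014}.
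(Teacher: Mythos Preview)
Your proposal is correct and follows essentially the same iteration as the paper: your recursion $R_{j+1}^2 = R_j^2/k^2 + (n\sigma+1)/2$ and its fixed-point form $R_j^2 - R_*^2 = k^{-2(j-1)}(R^2 - R_*^2)$ are exactly the paper's formula \eqref{4.016} rewritten, and the dichotomy into ``all steps of type \eqref{4.013}'' versus ``first occurrence of \eqref{4.010} at step $j^*$'' is identical. The only piece you leave sketched---the bound on $k^{2s(j^*-1)}R_{j^*}^{2\tau}$---is handled in the paper not by optimization but by the elementary inequality $k^{2(j^*-1)} \leq R^2/R_0^2$ (which follows from your own bound $N-1 \leq \log_k(R/R_0)$) together with a straightforward case split $s>\tau$ versus $s\leq\tau$.
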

\begin{proof}
Let $$R=|x|>R_0.$$ There is a nonnegative integer $m_0$, such that
\begin{eqnarray}\label{4.015}
R_0^2\ \ \leqq\ \ \frac{R^2}{k^{2m_0}}\ \ \leqq \ \ k^2R_0^2.
\end{eqnarray}
For $m\geqq 2$, denote
\begin{eqnarray}\label{4.016}
R_m^2&=&
\frac{R^2}{k^{2(m-1)}}+\frac{n\sigma+1}{2}[1+\frac{1}{k^2}+\cdots+\frac{1}{k^{2(m-2)}}].
\end{eqnarray}
Let $R_1=R$. Obviously, for $1\leqq m\leqq m_0+1$,
$$R_{m}>R_0.$$ It implies  Lemma \ref{9} is applicable for $B_{R_m}$. Hence, it will appear two cases. The first is that
\eqref{4.013} holds for every $1\leqq m\leqq m_0$. The second is
that there is an integer $1\leqq m_1\leqq m_0$, such that, for any
integer $1\leqq m\leqq m_1-1$, \eqref{4.013} holds and \eqref{4.010}
holds in $B_{R_{m_1}}$. In the first case, for $1\leqq m\leqq m_0$,
we have
\begin{eqnarray}\label{4.017}
\sup_{B_{R_m}}|u|^2 &\leqq& k^{2s} \sup_{B_{R_{m+1}}}|u|^2.
\end{eqnarray}
Iterating \eqref{4.017}, we get
\begin{eqnarray}
\sup_{B_{R}}|u|^2&=&\sup_{B_{R_1}}|u|^2 \ \ \leqq \ \
k^{2sm_0}\sup_{B_{R_{m_0+1}}}|u|^2\nonumber.
\end{eqnarray}
By \eqref{4.04}, \eqref{4.015} and \eqref{4.016}, we have
\begin{eqnarray}
R_{m_0+1}^2&=&\frac{R^2}{k^{2m_0}}+\frac{n\sigma+1}{2}[1+\frac{1}{k^2}+\cdots+\frac{1}{k^{2(m_0-1)}}]\nonumber\\
&=&\frac{R^2}{k^{2m_0}}+\frac{n\sigma+1}{2}\frac{1-1/k^{2m_0}}{1-1/k^2}\nonumber\\
&\leqq& k^2 R^2_0+\frac{n\sigma+1}{2}\frac{1}{1-1/k^2}\nonumber\\
&\leqq&(k^2+1)R_0^2\nonumber.
\end{eqnarray}
Combining the above two inequalities and \eqref{4.015},  we obtain
\begin{eqnarray}
\sup_{B_{R}}|u|^2&\leqq&
\frac{R^{2s}}{R_0^{2s}}\sup_{B_{\sqrt{k^2+1}R_0}}|u|^2.
\end{eqnarray}
In the second case, \eqref{4.017} holds for $1\leqq m \leqq m_1-1$,
then, similar, we have
\begin{eqnarray}
\sup_{B_{R}}|u|^2&\leqq& k^{2s(m_1-1)}\sup_{B_{R_{m_1}}}|u|^2\ \
\leqq\ \  Ck^{2s(m_1-1)}R_{m_1}^{2\tau}.\nonumber
\end{eqnarray}
Here, $C$ is a constant depending on $c$ and $s$. By \eqref{4.015},
we have
$$k^{2(m_1-1)}\ \ \leqq \ \ \frac{R^2}{R_0^2k^{2(m_0+1-m_1)}}\ \ \leqq \ \ \frac{R^2}{R_0^2}.$$
Combining the above two inequalities, for $s>\tau$, we get
\begin{eqnarray}
\sup_{B_{R}}|u|^2
&\leqq&Ck^{2(s-\tau)(m_1-1)}\{k^{2(m_1-1))}[\frac{R^2}{k^{2(m_1-1)}}
+\frac{n\sigma+1}{2}(1+\cdots+\frac{1}{k^{2(m_1-2)}})]\}^{\tau}\nonumber\\
&\leqq& C \frac{R^{2(s-\tau)}}{R_0^{2(s-\tau)}}[R^2+k^{2(m_1-1)}\frac{n\sigma+1}{2}\frac{1}{1-1/k^2}]^{\tau}\nonumber\\
&\leqq&C R^{2(s-\tau)}[R^2+\frac{R^2}{R_0^2}R_0^2]^{\tau}\nonumber\\
&\leqq& C R^{2s}\nonumber.
\end{eqnarray}
For $s\leqq \tau$, the above second inequality  becomes
\begin{eqnarray}
\sup_{B_{R}}|u|^2
&\leqq&C[R^2+k^{2(m_1-1)}\frac{n\sigma+1}{2}\frac{1}{1-1/k^2}]^{\tau}\
\ \leqq \ \ CR^{2\tau}\nonumber.
\end{eqnarray}
 Combining the above two inequalities, for $|x|>R_0$, we obtain
\begin{eqnarray}
|u|^2(x)&\leqq&C(1+\sup_{B_{\sqrt{k^2+1}R_0}}|u|^2)|x|^{2\max\{s,\tau\}}.\nonumber
\end{eqnarray}
It implies \eqref{4.014}.
\end{proof}
\begin{remark}
The method using in the proof of Theorem \ref{thm15} also can be
used to obtain a growth estimate of $|u|$ for the system \eqref{meq}
in Euclidean space. But it is not sharp.
\end{remark}

\section{A Bernstein type result of space-like  self-shrinking graph in pseudo-Euclidean space}

\begin{prop}
In pseudo-Euclidean space with index $m$, if the eigenvalues of the
metric matrix $(g_{ij})$ have a positive low bound, then $M$ is a
linear subspace.
\end{prop}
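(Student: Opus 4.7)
The plan is to manufacture an SSH-function directly from the volume element and invoke Lemma \ref{sta}. Set $\psi=-\ln\det(g_{ij})$. Because the eigenvalues of $(g_{ij})$ have a uniform positive lower bound $\delta>0$, we have $1-\lambda_i^2\geqq\delta$ for the singular values of $du$; in particular $\psi$ is bounded, and Lemma \ref{sta} applies to $(a_{ij})=(g_{ij})$ for free, since $\nu(x)\geqq\delta$ gives $\liminf_{|x|\to\infty}|x|^{2}\nu(x)=+\infty>n$. So everything reduces to proving the SSH inequality.

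To obtain that inequality I would redo the computation \eqref{3.6}--\eqref{3.8} for the pseudo-Euclidean metric $g_{ij}=\delta_{ij}-\sum_{\alpha}u^{\alpha}_{i}u^{\alpha}_{j}$, tracking sign flips. Two minuses appear in $g_{ij,p}$, two more in $g^{ij}_{,p}$, so the quartic part of $g^{ij}\psi_{ij}$ coming from $g^{ij}g^{kl}_{,i}g_{kl,j}$ retains its Euclidean sign, while the $g^{ij}g^{kl}g_{kl,ij}$ piece (producing the $|D^{2}u|^{2}$ term and the third-order term) flips sign. Eliminating the third-order term by differentiating \eqref{meq} and diagonalising $du$ as in \eqref{diag} with singular values $\lambda_{i}\in[0,1)$, the analogue of \eqref{3.8} reads
\begin{align*}
L_{g}\psi &= 2\sum_{i,p,q}\frac{\lambda^{2}_{q}(u^{q}_{pi})^{2}}{(1-\lambda^{2}_{i})(1-\lambda^{2}_{p})(1-\lambda^{2}_{q})}+2\sum_{\alpha,i,p}\frac{(u^{\alpha}_{pi})^{2}}{(1-\lambda^{2}_{i})(1-\lambda^{2}_{p})}\\
&\quad-2\sum_{i,p,q}\frac{\lambda_{p}\lambda_{q}u^{p}_{qi}u^{q}_{pi}}{(1-\lambda^{2}_{i})(1-\lambda^{2}_{p})(1-\lambda^{2}_{q})}.
\end{align*}

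Next I would use $2|\lambda_{p}\lambda_{q}u^{p}_{qi}u^{q}_{pi}|\leqq\lambda^{2}_{p}(u^{p}_{qi})^{2}+\lambda^{2}_{q}(u^{q}_{pi})^{2}$; after summing and symmetrising in $p\leftrightarrow q$ this absorbs the last sum against the first sum exactly, leaving
\[
L_{g}\psi\;\geqq\;2\sum_{\alpha,i,p}\frac{(u^{\alpha}_{pi})^{2}}{(1-\lambda^{2}_{i})(1-\lambda^{2}_{p})}.
\]
Since $\psi_{p}=2\sum_{i}\lambda_{i}u^{i}_{ip}/(1-\lambda^{2}_{i})$, Cauchy-Schwarz together with $\lambda^{2}_{i}/(1-\lambda^{2}_{i})\leqq(1-\delta)/\delta$ gives
\[
g^{pq}\psi_{p}\psi_{q}\;\leqq\;\frac{4n(1-\delta)}{\delta}\sum_{i,p}\frac{(u^{i}_{ip})^{2}}{(1-\lambda^{2}_{i})(1-\lambda^{2}_{p})}.
\]
Restricting $\alpha$ to $\alpha=i$ in the previous lower bound combines these into $L_{g}\psi\geqq\varepsilon\,g^{pq}\psi_{p}\psi_{q}$ with $\varepsilon=\delta/(2n(1-\delta))>0$; that is, $\psi$ is SSH.

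Lemma \ref{sta} then forces $\psi$ to be constant, so $\psi_{i}\equiv 0$ and $L_{g}\psi\equiv 0$. The chain of inequalities collapses to $\sum_{\alpha,i,p}(u^{\alpha}_{pi})^{2}/((1-\lambda^{2}_{i})(1-\lambda^{2}_{p}))\equiv 0$ pointwise, hence $u^{\alpha}_{pi}\equiv 0$ for every $\alpha,p,i$. Each $u^{\alpha}$ is therefore affine, and plugging an affine function back into \eqref{meq} kills the constant term, so $M$ is a linear subspace. The main obstacle is purely computational bookkeeping of the sign flips between the Euclidean formulas (3.6)--(3.8) and their pseudo-Euclidean analogues; beyond that, only Cauchy-Schwarz and Lemma \ref{sta} are used.
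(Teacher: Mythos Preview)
Your proof is correct and follows essentially the same route as the paper: set $\psi=-\ln\det(g_{ij})$, show $L_g\psi\geqq 2\sum_\alpha g^{ij}g^{pq}u^\alpha_{pi}u^\alpha_{qj}\geqq \varepsilon\, g^{ij}\psi_i\psi_j$, and invoke Lemma~\ref{sta} (whose hypothesis is guaranteed by the eigenvalue lower bound) to force $D^2u\equiv 0$. The only cosmetic difference is that the paper handles the two cross terms by recognising them as the perfect square $\sum_{\alpha,\beta}g^{ij}g^{mk}g^{nl}(u^\alpha_{mi}u^\alpha_n-u^\alpha_{ni}u^\alpha_m)(u^\beta_{kj}u^\beta_l-u^\beta_{lj}u^\beta_k)\geqq 0$, whereas you diagonalise as in \eqref{diag} and use AM--GM; both yield the same lower bound.
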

\begin{proof}
In pseudo-Euclidean space, using a similar argument for the volume
element function, we obtain
\begin{eqnarray}\label{5.11111}
&&L_g(-\phi)\\
&=&2\sum_{\alpha,\beta}g^{ij}g^{mk}g^{nl}u^{\alpha}_{mi}u^{\alpha}_nu^{\beta}_{kj}u^{\beta}_l
-2\sum_{\alpha,\beta}g^{ij}g^{mk}g^{nl}u^{\alpha}_{ni}u^{\alpha}_mu^{\beta}_{kj}u^{\beta}_l\nonumber\\
&&+2\sum_{\alpha}g^{ij}g^{mn}u^{\alpha}_{mi}u^{\alpha}_{nj}\nonumber\\
&=&\sum_{\alpha,\beta}g^{ij}g^{mk}g^{nl}(u^{\alpha}_{mi}u^{\alpha}_n-u^{\alpha}_{ni}u^{\alpha}_m)
(u^{\beta}_{kj}u^{\beta}_l-u^{\beta}_{lj}u^{\beta}_k)+2\sum_{\alpha}g^{ij}g^{mn}u^{\alpha}_{mi}u^{\alpha}_{nj}\nonumber\\
&\geqq&2\sum_{\alpha}g^{ij}g^{mn}u^{\alpha}_{mi}u^{\alpha}_{nj}\nonumber\\
&\geqq& \varepsilon g^{ij}(-\phi)_i(-\phi)_j\nonumber.
\end{eqnarray}
By Lemma \ref{sta}, we get that $-\phi$ is a constant. Then the
first inequality of \eqref{5.11111} implies $u^{\alpha}_{ij}=0$. This is
the result.
\end{proof}

Now we continue to consider spacelike self-shrinkering graph $M$
described by \eqref{x,u} in pseudo-Euclidean space with index $m$.
In what following, we denote the pseudo-inner product
$\langle\cdot,\cdot\rangle$ by
$$\langle X,X\rangle=\sum_ix_i^2-\sum_\alpha (y^\alpha)^2,\ \ \mathrm{for\ each}\ X=(x_1,\cdots,x_n;y^1,\cdots,y^m).$$
We write $|X|^2=\langle X,X\rangle$ for $X\in\R^{n+m}$. We assume that
the metric of $M$ is the induced metric.

For any $p\in\R^n$, we choose a coordinate system as (\ref{diag}).
Then, there are orthonormal vectors $\{e_i(p)\}_{i=1}^n\subset
T_pM$, $\{n_\alpha(p)\}_{\alpha=1}^m\subset N_pM$ defined as
\begin{equation}\aligned\label{4.1}
e_i(p)=\frac{1}{\sqrt{1-\lambda_i^2}}(E_i+\lambda_iE_{n+i}), \qquad
n_{\alpha}(p)=\frac{1}{\sqrt{1-\lambda_{\alpha}^2}}(\lambda_{\alpha}E_{\alpha}+E_{n+\alpha}),
\endaligned
\end{equation}
where spacelike implies $|\lambda_i|<1$ for every $i$. Then we can
choose a normal frame $\{e_i\}_{i=1}^n\subset \Gamma(TM)$ and an
orthonormal frame $\{n_\alpha\}_{\alpha=1}^m\subset \Gamma(NM)$
locally, such that $e_i(p)$ and $n_\alpha(p)$ is defined by
\eqref{4.1}. We will use these notations in the following, which are
different from the definitions in  section 2.

The second fundamental form and mean curvature
vector are,
$$B_{ij}=\overline{\nabla}^N_{e_i}e_j=\sum_\alpha h_{ij}^{\alpha}
n_\alpha,\ \  \text{and} \ \
H=\sum_i\overline{\nabla}^N_{e_i}e_i.$$ Denote the
standard $n$-form in $\R^n$ by $$dx=dx_1\wedge\cdots\wedge dx_n.$$
Then, we define a function,
$$*dx=dx(e_1,\cdots,e_n).$$
Then, at the point $p$, we have
\begin{equation}\aligned\label{4.2}
\langle\overline{\nabla}_{e_i}\overline{\nabla}_{e_i}e_j,e_j\rangle\
\ =\ \
\overline{\nabla}_{e_i}\langle\overline{\nabla}_{e_i}e_j,e_j\rangle
-\langle\overline{\nabla}_{e_i}e_j,\overline{\nabla}_{e_i}e_j\rangle\
\ =\ \ \sum_\alpha(h_{ij}^\alpha)^2.
\endaligned
\end{equation}
Since
$\overline{\nabla}_{e_i}e_j-\overline{\nabla}_{e_j}e_i=[e_i,e_j]\in
TM$, then we have, at $p$,
\begin{equation}\aligned\nonumber
&\langle\overline{\nabla}_{e_i}\overline{\nabla}_{e_i}e_j,n_\alpha\rangle-\langle\overline{\nabla}_{e_i}\overline{\nabla}_{e_j}e_i,n_\alpha\rangle\\
=\ \ &\langle\overline{\nabla}_{e_i}[e_i,e_j],n_\alpha\rangle\ \ =\ \ -\langle[e_i,e_j],\overline{\nabla}_{e_i}n_\alpha\rangle\ \ =\ \ 0.
\endaligned
\end{equation}
Since $\R^{n+m}$ is flat, using  \eqref{1.2} and the above equality,
we have, at $p$,
\begin{eqnarray}\label{4.3}
&&\sum_i\langle\overline{\nabla}_{e_i}\overline{\nabla}_{e_i}e_j,n_\alpha\rangle\\
&=&
\sum_i\langle\overline{\nabla}_{e_i}\overline{\nabla}_{e_j}e_i,n_\alpha\rangle\
\ \ \  =\ \
\sum_i\langle\overline{\nabla}_{e_j}\overline{\nabla}_{e_i}e_i,n_\alpha\rangle\
\ \ = \ \ \langle\overline{\nabla}_{e_j}
H,n_\alpha\rangle\nonumber\\ & = &
-\left\langle\overline{\nabla}_{e_j}(X-\sum_k\langle X,e_k\rangle
e_k),n_\alpha\right\rangle\nonumber\\
&=&\sum_k\langle X,e_k\rangle\langle\overline{\nabla}_{e_j}
e_k,n_\alpha\rangle \ \ = \ \ -\sum_k\langle X,e_k\rangle
h_{jk}^{\alpha}.\nonumber
\end{eqnarray}
By the definition of $dx$ and (\ref{4.1}), we have, at $p$,
\begin{eqnarray}\label{4.4}
&&\nabla_{e_i}*dx \ \ = \ \ \sum_jdx(e_1,\cdots,\overline{\nabla}_{e_i}e_j,\cdots,e_n)\\
&=&\sum_jh_{ij}^\alpha
dx(e_1,\cdots,\underbrace{n_\alpha}_j,\cdots,e_n) \ \ =\ \
\sum_jh^{j}_{ij}\lambda_j*dx.\nonumber
\end{eqnarray}
Let us calculate the Laplace of $*dx$, which is similar to the
minimal submanifolds case (see \cite{Wang2}). Combining
(\ref{4.1})-(\ref{4.3}), we obtain
\begin{eqnarray}\label{4.5}
&&\Delta*dx\\
&=&\sum_{i,j\neq
k}dx(e_1,\cdots,\overline{\nabla}_{e_i}e_j,\cdots,\overline{\nabla}_{e_i}e_k,\cdots,e_n)
+\sum_{i,j}dx(e_1,\cdots,\overline{\nabla}_{e_i}\overline{\nabla}_{e_i}e_j,\cdots,e_n)\nonumber\\
&=&\sum_{i,j\neq k}h_{ij}^\alpha h_{ik}^\beta
dx(e_1,\cdots,\underbrace{n_\alpha}_j,\cdots,\underbrace{n_\beta}_k,\cdots,e_n)
+\sum_{i,j}\langle\overline{\nabla}_{e_i}\overline{\nabla}_{e_i}e_j,e_j\rangle dx(e_1,\cdots,e_n)\nonumber\\
&&-\sum_{i,j}\langle\overline{\nabla}_{e_i}\overline{\nabla}_{e_i}e_j,n_\alpha\rangle dx(e_1,\cdots,\underbrace{n_\alpha}_j,\cdots,e_n)\nonumber\\
&=&\sum_{i,j\neq k}\lambda_j\lambda_kh_{ij}^jh_{ik}^k*dx-\sum_{i,j\neq k}\lambda_j\lambda_kh_{ij}^kh_{ik}^j*dx+\sum_{i,j,\alpha}(h_{ij}^{\alpha})^2*dx\nonumber\\
&&+\sum_{j,k}\langle X,e_k\rangle h_{jk}^{j}\lambda_j*dx\nonumber.
\end{eqnarray}
We define a second order differential operator(see \cite{CM} for Euclidean space),
$$P\ \ =\ \ \Delta-\langle X,\nabla\cdot\rangle.$$
Using the Cauchy inequality, since $|\lambda_i|< 1$, we have
\begin{equation}\aligned\label{4.0}
\left|\sum_{i,j\neq k}\lambda_j\lambda_kh_{ij}^kh_{ik}^j\right|\ \ \leqq\ \
\frac{1}{2}\sum_{i,j\neq k}((h_{ij}^k)^2+(h_{ik}^j)^2)\ \ =\ \
\sum_{i,j\neq k}(h_{ij}^k)^2.
\endaligned
\end{equation}
Then (\ref{4.5}) becomes
\begin{eqnarray}\label{4.6}
P(*dx)&=&\left(\sum_{i,j\neq k}\lambda_j\lambda_kh_{ij}^jh_{ik}^k-\sum_{i,j\neq k}\lambda_j\lambda_kh_{ij}^kh_{ik}^j+\sum_{i,j,\alpha}(h_{ij}^{\alpha})^2\right)*dx\\
&\geqq&\left(\sum_{i,j\neq k}\lambda_j\lambda_kh_{ij}^jh_{ik}^k+\sum_{i,k}\lambda_i^2(h_{ik}^{i})^2\right)*dx\nonumber\\
&=&\left(\sum_{i,j,k}\lambda_j\lambda_kh_{ij}^jh_{ik}^k\right)*dx \
\ =\ \ \frac{|\nabla*dx|^2}{*dx}.\nonumber
\end{eqnarray}

We are in the position to give the proof of Theorem \ref{thm2}.
\vspace{1mm}

\noindent{\it Proof of Theorem \ref{thm2}.} Denote the induced
metric in $M$ by $g=\sum_{i,j}g_{ij}dx_idx_j$, seeing \eqref{pmtc},
and $\det g=\det(g_{ij})$. Denote weighted function $\rho$ by
$$\rho=\exp(-\frac{|X|^2}{2})=\exp(-\frac{|x|^2-|u(x)|^2}{2}),$$
and the volume element of $M$ by $$d\mu=\sqrt{\det
g}dx_1\wedge\cdots\wedge dx_n=\sqrt{\det g}dx.$$ Then, for any local
orthonormal frame $\{e_i\}_{i=1}^n$ of tangent bundle $TM$, we have
$$*dx=\f1{\sqrt{\det g}},\ \ \text{ and } \ \ d\mu(e_1,\cdots,e_n)=1.$$
Since $2\langle X,\nabla\cdot\rangle=\langle\nabla
|X|^2,\nabla\cdot\rangle$, then the operator $P$ is invariant under
orthonormal transformations from $\R^n$ to $\R^n$ and from $\R^m$ to
$\R^m$. Hence, (\ref{4.6}) holds in the whole  $M$. Let
$f=\dfrac{1}{\sqrt{\det g}}$ and $\eta\in C_c^\infty(M)$. We will
determine $\eta$ later.  By (\ref{4.6}), we have
\begin{equation}\aligned\label{4.7}
\int_M\frac{|\nabla f|^2}{f}&\eta^2\rho d\mu\ \ \leqq\ \ \int_M P(f)\eta^2\rho d\mu\ \ =\ \ \int_M \mathrm{div}(\rho\nabla f)\eta^2 d\mu\\
&=\ \-2\int_M\eta(\nabla f\cdot\nabla\eta)\rho d\mu\ \
\leqq\ \ \frac{1}{2}\int_M\frac{|\nabla f|^2}{f}\eta^2\rho
d\mu+2\int_M|\nabla\eta|^2f\rho d\mu.
\endaligned
\end{equation}
Here, 'div' is the divergence of $M$. Then
\begin{equation}\aligned\label{4.8}
\int_M\frac{|\nabla f|^2}{f}\eta^2\rho d\mu \ \ \leqq \ \
4\int_{\R^n}|\nabla\eta|^2\rho dx \ \ = \ \
4\int_{\R^n}g^{ij}\eta_i\eta_j\exp(-\frac{|x|^2-|u(x)|^2}{2})dx.
\endaligned
\end{equation}
 On the other hand, there exists a constant $\kappa$ satisfying
 $0<\kappa<1$, such that $|\lambda_i|\leq\kappa$ for any $x\in
 \overline{B_1(0)}\subset\mathbb{R}^n,i\in\{1,\cdots,n\}$. If
 $|u(x)|\neq0$, using $u(0)=0$, then there exists a nonnegative
 number $a<1$, such that, $u(sx)\neq0$ for any $s\in(a,1]$ and $u(ax)=0$.
 Hence, for $p\in[ax,x]$,
 \begin{eqnarray}\label{4.10}
 \left|\sum_ix_i\dfrac{\p}{\partial x_i}(\sum_{\alpha}
 (u^\alpha)^2)\right|&=&2\left|\sum_{i,\alpha}x_iu^\alpha
 u^\alpha_i\right| \\
 &\leqq&  2\left(\sum_{\alpha}(u^{\alpha})^2\right)^{1/2}\left(\sum_{\alpha,i}x_i^2(u^{\alpha}_i)^2\right)^{1/2}\nonumber\\
 &\leqq&2\kappa|x||u|.\nonumber
 \end{eqnarray}
 Here we use the fact that every diagonal entry of a positive matrix is no large than the maximum eigenvalue of this matrix. For any $x\in \overline{B_1(0)}$, using the above inequality,
 we get
\begin{eqnarray}\label{4.11}
|u(x)|&=&\int_a^1\frac{\partial}{\partial t}|u(tx)|\mathrm{d}t \ \ =
\ \ \int_a^1\frac{1}{2|u(tx)|}\frac{\p}{\p t}|u(tx)|^2
\mathrm{d}t
\\ &=& \int_a^1\sum_i\frac{x_i}{2|u(tx)|} \frac{\p}{\partial
x_i}(\sum_\alpha (u^\alpha)^2)\mid_{tx}\mathrm{d}t \ \ \leqq \ \
\int_a^1\kappa|x|\mathrm{d}t \ \ \leqq \ \ \kappa.\nonumber
\end{eqnarray}
For any $x\in \mathbb{R}^n\backslash B_1(0)$, $u(x)\neq0$, there is
some  $b\geqq 1/|x|$, such that, for any $\tilde{b}\geqq 1/|x|$, and
$u(sx)\neq0$ holding for any $s\in(\tilde{b},1]$, we have $b\leqq\tilde{b}$. Then, by \eqref{4.10},
we have
\begin{eqnarray}\label{4.12}
\ \ \ \ \ \ |u(x)|&\leqq&
\left|\int_b^1\frac{d}{dt}|u(tx)|\mathrm{d}t\right|+\left|u(\frac{x}{|x|})\right|\
\
\leqq\ \ \int_b^1\frac{x_i}{2|u(tx)|}\frac{\p}{\partial x_i}(\sum_\alpha (u^\alpha)^2)\mid_{tx}\mathrm{d}t+\kappa\\
&\leqq&\ \ \int_b^{1}|x|\mathrm{d}t+\kappa\ \ \leqq \ \
|x|(1-b)+\kappa \ \ \leqq \ \ |x|-1+\kappa.\nonumber
\end{eqnarray}
Combining \eqref{4.11} and \eqref{4.12}, we get
\begin{equation}\aligned\label{4.13}
|u(x)|^2 \ \ \leqq \ \ |x|^2-2(1-\kappa)|x|+2, \ \ \ \mathrm{for\ all}\ x\in\R^n.
\endaligned
\end{equation}
Combining (\ref{4.8}) and (\ref{4.13}), we obtain
\begin{equation}\aligned\label{4.14}
\int_M\frac{|\nabla f|^2}{f}\eta^2\rho\ \ \leqq \ \
4\int_{\R^n}g^{ij}\eta_i\eta_j e^{1-(1-\kappa)|x|}dx\ \ \leqq \ \
4\int_{\R^n}\f{|\nabla\eta|^2}{\det(g)}e^{1-(1-\kappa)|x|}dx.
\endaligned
\end{equation}
Now we choose $\eta$.  For any positive $r$, let $\eta=\eta(|x|)$ be
a cut-off function in $\R$, $\eta\equiv1$ in $[0,r]$, $\eta\equiv0$
in $[2r,+\infty)$, and $|\eta'|\le C/r$, where $C$ is some constant
not depending on $r$. Since $\det(g)$ has subponential decay in
$|x|$ (c.f. Theorem \ref{thm2}), letting $r$ go to infinity in
\eqref{4.14}, we have $\nabla f=0$. Then $*dx$ is a constant. By
(\ref{4.0}) and (\ref{4.6}), we obtain $h_{ij}^{\alpha}=0$, which
implies the theorem. \qed

\bibliographystyle{amsplain}

\end{document}